\newcommand{\bm}[1]{\boldsymbol{#1}}
\renewcommand{\d}{\mathrm{d}}
\renewcommand{\P}{\mathbb{P}}
\newcommand{\Y}{\bm{Y}}
\newcommand{\E}{\mathbb{E}}
\newcommand{\mB}{\mathcal{B}}
\newcommand{\mR}{\mathcal{R}}
\newcommand{\mK}{\mathcal{K}}
\newcommand{\mX}{\mathcal{X}}
\newcommand{\mT}{\mathcal{T}}
\newcommand{\wt}[1]{\widetilde{#1}}
\def\mS{\mathcal{S}}
\newcommand{\Var}{\mathsf{Var\,}}
\newcommand{\iid}{\stackrel{iid}{\sim}}
\newcommand{\wh}[1]{\smash{\widehat{#1}}}
\def\C {\,|\:}
\def\C {\,|\:}
\def\a{\bm{a}}
\def\mF{\mathcal{F}}
\def\b{\bm{\beta}}
\def\Y{\bm{Y}}
\def\x{\bm{x}}
\def\y{\bm{y}}
\def\mV{\mathcal{V}}
\def\mE{\mathcal{E}}
\def\BO{\mT}
\def\b{\bm{\beta}}
\renewcommand{\d}{\mathrm{d}}
\newcommand{\e}{\mathrm{e}}
\renewcommand{\1}{\mathbb{I}}
\newcommand{\N}{\mathbb{N}}
\newcommand{\R}{\mathbb{R}}
\newcommand{\Ha}{\mathcal{H}^\alpha}
\newcommand{\Haa}{\mathcal{H}^\gamma}
\newtheorem{definition}{Definition}[section]
\newtheorem{lemma}{Lemma}[section]
\newtheorem{theorem}{Theorem}[section]
\newtheorem{remark}{Remark}[section]
\def\spacingset#1{\renewcommand{\baselinestretch}%
{#1}\small\normalsize}
\numberwithin{equation}{section}
\title{\sf On Semi-parametric Bernstein-von Mises \\
Theorems for BART}
\author{Veronika Ro\v{c}kov\'{a}\footnote{\sl Assistant Professor in Econometrics and Statistics and  James S. Kemper Faculty Scholar  at the University of Chicago Booth School of Business. This research was supported by the James S. Kemper Faculty Research Fund.}}
\begin{document}

\maketitle

\begin{abstract}
Few methods in Bayesian non-parametric statistics/ machine learning have received as much attention as
Bayesian Additive Regression Trees (BART). 
While BART is now routinely performed for  prediction tasks,  its theoretical properties   began to be understood only very recently.
In this work, we continue the theoretical investigation of BART initiated by \cite{rockova_vdp}.
In particular, we study the Bernstein-von Mises (BvM) phenomenon (i.e. asymptotic normality) for smooth linear functionals of the  regression surface  within the framework of non-parametric regression with fixed covariates. 
As with other adaptive priors,  the BvM phenomenon may fail when the regularities of the functional and the truth are not compatible.
To overcome the curse of adaptivity under hierarchical priors, we induce a self-similarity assumption to ensure convergence towards a single Gaussian distribution as opposed 
to a Gaussian mixture.
Similar qualitative restrictions on the functional parameter are known to be necessary for adaptive inference. 
Many machine learning methods lack coherent probabilistic mechanisms for gauging uncertainty.
BART readily provides such quantification via posterior credible sets. The BvM theorem implies that the credible sets are also confidence regions with the same asymptotic coverage.  
This paper presents the first asymptotic normality result for BART priors, providing another piece of evidence that BART is a valid tool from a frequentist point of view.
\end{abstract}

\clearpage

\spacingset{1.3}

\section{Introduction}
This note studies the Gaussian approximability of certain aspects of posterior distributions  in  non-parametric regression with trees/forest priors.
 Results of this type, initially due to \cite{laplace} but most commonly known as Bernstein-von Mises (BvM) theorems, imply that posterior-based inference asymptotically coincides 
 with the one based on traditional frequentist  $1/\sqrt{n}$-consistent estimators. In this vein, BvM theorems provide a rigorous frequentist justification of Bayesian inference. 
The main thrust of this work is to understand the extent to which  this phenomenon holds for various incarnations of Bayesian Additive Regression Trees (BART) of  \cite{bart},  one of the workhorses of Bayesian machine learning.

In simple words, the BvM phenomenon  occurs when, as the number of observations increases, the posterior distribution has approximately the shape of a Gaussian distribution centered at an efficient estimator of the parameter of interest. 
 Moreover, the posterior credible sets,  i.e. regions with prescribed posterior probability, are then {\sl also} confidence regions with the same asymptotic coverage.
This property has nontrivial practical implications in that constructing confidence regions from MCMC simulation can be relatively straightforward whereas computing confidence regions directly is not. Under fairly mild assumptions, BvM statements  can be expected to hold in regular finite-dimensional models \citep{vdv_book}.

  
Unfortunately, the frequentist theory on asymptotic normality does not generalize fully to semi-  and non-parametric estimation problems \citep{bickel_kleijn}. 
 Freedman initiated the discussion on the  consequences of unwisely chosen priors in the 1960's, providing a negative BvM result in a basic $\ell_2$-sequence Gaussian conjugate setting.
The warnings against seemingly  innocuous priors that may lead to  posterior inconsistency were then reiterated  many times in the literature, including  \cite{cox_93} and \citep{diaconis_freedman1,
diaconis_freedman2}.  Other counterexamples and anomalies of the BvM behavior in infinite-dimensional problems can be  found in  \cite{Johnstone10} and \cite{leahu}.
While, as pointed out by \cite{freedman_1999},  analogues of the BvM property for infinite-dimensional parameters are not immediately obvious,  rigorous notions  of non-parametric BvM's
 have been  introduced in several pioneering works \citep{leahu,ghosal_2000,castillo_nickl1,castillo_nickl2}.

Unwisely chosen priors leave room for unintended consequences also in semi-parametric contexts  \citep{riv_rous}. \cite{castillo_bias} provided an interesting counterexample where the posterior does not display the BvM behavior due to a non-vanishing bias in the centering of the posterior distribution.
Various researchers have nevertheless documented  instances of the BvM limit  in  semi-parametric models (a) when the parameter can be separated into a finite-dimensional parameter of interest and an infinite-dimensional nuisance parameter \citep{castillo_gp, shen_2002,bickel_kleijn,cheng_kosorok, Johnstone10, jonge_zanten}, and (b) when the parameter of interest is a functional of the infinite-dimensional parameter \citep{riv_rous,castillo_rous}. In this work, we focus on the latter class of semi-parametric BvM's and study the asymptotic behavior of smooth linear functionals of the regression function.

We consider the standard non-parametric regression setup, where a vector  of responses $\Y^{(n)}=(Y_1,\dots,Y_n)'$ is linked to fixed (rescaled) predictors $\x_i=(x_{i1},\dots,x_{ip})'\in[0,1]^p$ for each $1\leq i\leq n$ through
\begin{equation}\label{model}
Y_i=f_0(\x_i)+\varepsilon_i,\quad \varepsilon_i\iid\mathcal{N}(0,1),
\end{equation}
where $f_0$ is an unknown $\alpha$-H\"{o}lder continuous function on a unit cube $[0,1]^p$. The true generative model  giving rise to \eqref{model} will be denoted with $\P_0^n$.

Each model is parametrized by $f\in\mF$, where $\mF$ is an infinite-dimensional set of possibilities of $f_0$. Let $\Psi:\mF\rightarrow \R$ be a measurable functional of interest and let $\Pi$ be a probability distribution on $\mF$. Given observations $\Y^{(n)}$ from  \eqref{model}, we study the asymptotic behavior of the posterior distribution of $\Psi(f)$, denoted with $\Pi[\Psi(f)\C\Y^{(n)}]$. Let $\mathcal{N}(0,V)$ denote the centered normal law with a covariance matrix $V$. In simple words, we want to show that under the Bayesian CART or BART priors on $\mF$, the {\sl posterior distribution satisfies the BvM-type property} in the sense that
\begin{equation}\label{BvM}
\Pi[\sqrt{n}\left(\Psi(f)-\wh\Psi\right)\C\Y^{(n)}]\rightsquigarrow N(0,V)
\end{equation}
as $n\rightarrow\infty$ in $\P_0^n$-probability, where $\wh\Psi$ is a random centering point estimator  and where $\rightsquigarrow$ stands for weak convergence. We make this statement more precise in Section \ref{sec:rudi}

\cite{castillo_rous} provide general conditions on the model and on the functional $\Psi$ to guarantee that \eqref{BvM} holds. These conditions describe how the choice of the prior influences the bias $\wh\Psi$ and variance $V$.  Building on this contribution, we provide (a) tailored statements for various incarnations of tree/forest priors that have  occurred in the literature, (b) extend the considerations to {\sl adaptive} priors under self-similarity.

\subsection{Notation}
The notation $\lesssim$ will be used to denote  inequality up to a constant, {$a\asymp b$ denotes $a\lesssim b$ and $b\lesssim a$ and $a\vee b$ denotes $\max\{a,b\}$}. The $\varepsilon$-covering number of a set $\Omega$ for a semimetric $d$, denoted by $N(\varepsilon,\Omega,d),$ is the minimal number of $d$-balls of radius $\varepsilon$ needed to cover set $\Omega$. We denote by $\phi(\cdot;\sigma^2)$ the normal density with  zero mean and  variance $\sigma^2$ 
With $||\cdot||_2$ we denote the standard Euclidean norm and with
%
$\lambda_{min}(\Sigma)$ and $\lambda_{max}(\Sigma)$ the extremal eigenvalues of a matrix $\Sigma$. The set of $\alpha$-H\"{o}lder continuous functions  (i.e. H\"{o}lder smooth with $0<\alpha\leq1$) on $[0,1]^p$ will be denoted with $\Ha_p$.

\section{Rudiments}\label{sec:rudi}
Before delving into our development, we first we make precise the definition of asymptotic normality.
\begin{definition}
We say that the posterior distribution for the functional $\Psi(f)$ is asymptotically normal with centering $\Psi_n$ and variance $V$ if, for $\beta$ the bounded Lipschitz metric for weak convergence, and  the real-valued mapping $\tau_n:\sqrt{n} [\Psi(f)-\Psi_n]$, it holds that 
\begin{equation}\label{beta:convergence}
\beta(\Pi[\cdot\C\Y^{(n)}]\circ \tau_n^{-1},\mathcal{N}(0,V))\rightarrow 0
\end{equation}
in $\P_0^n$ probability as $n\rightarrow\infty$, which we denote as $\Pi[\cdot\C\Y^{(n)}]\circ \tau_n^{-1}\rightsquigarrow\mathcal{N}(0,V)$.
\end{definition}

When efficiency theory at the rate $\sqrt{n}$ is available,  we say that the posterior distribution satisfies the BvM theorem if  \eqref{beta:convergence} holds with $\Psi_n=\wh\Psi+o_P(1/\sqrt{n})$ for $\wh\Psi$ a linear efficient estimator of $\Psi(f_0).$
The proof of such a statement typically requires of a few steps (a) a semi-local step  where one proves that the posterior distribution concentrates on certain sets and (b) a strictly local study on these sets, which can be carried out under the LAN (local asymptotic normality) conditions.
Denoting the log-likelihood  with
$$
\ell_n(f)=\frac{n}{2}\log 2\pi-\sum_{i=1}^n\frac{[Y_i-f(\x_i)]^2}{2},
$$
we define  the log-likelihood ratio  $\Delta_{n}(f)=\ell_n(f)-\ell_n(f_0)$  and express it  as a sum of a quadratic term and  a stochastic term via the LAN  expansion
as follows
\begin{align}
\Delta_n(f)
&=-\frac{n}{2}\|f-f_0\|_L^2+\sqrt{n}W_n(f-f_0),
\end{align}
where 
\begin{align*}
\|f-f_0\|_L^2&=\frac{1}{n}\sum_{i=1}^n[f_0(\x_i)-f(\x_i)]^2,\\
W_n(f-f_0)&=\langle f-f_0,\sqrt{n}\,\bm{\varepsilon}\rangle_L=\frac{1}{n}\sum_{i=1}^n\sqrt{n}\,\varepsilon_i\,[f(\x_i)-f_0(\x_i)].
\end{align*}
Recall that the phrase ``semi-parametric" here refers  to the  problem of estimating functionals in an infinite-dimensional model rather than Euclidean parameters in the presence of  infinite-dimensional nuisance parameters. In this note, we consider the smooth linear functional 
\begin{equation}\label{psi_functional}
\Psi(f)=\frac{1}{n}\sum_{i=1}^n a(\x_i)f(\x_i)
\end{equation}
for some  smooth uniformly bounded $\gamma$-H\"{o}lder continuous function $a(\cdot)$, i.e. $\|a\|_\infty< C_a$ and $a\in\mathcal{H}^\gamma_p$ for some $0<\gamma\leq1$.  Were $\gamma>1$, H\"{o}lder continuity would imply that $a(\cdot)$ is a constant function and \eqref{psi_functional} boils down to a constant multiple of the average regression surface evaluated at fixed design points. This quantity is actually of independent interest and has been studied in a different setup by \cite{ray_vdv} who focus on the posterior distribution of the ``half average treatment effect estimator" (the mean regression surface) in the presence of missing data and random covariates.
Besides its standalone purpose, the semi-parametric BvM result for functionals \eqref{psi_functional} has a useful alternative purpose in the multiscale approach to posteriors towards obtaining non-parametric BvM statements  for BART priors \citep{castillo_rockova}. One of the ingredients is proving convergence of  finite-dimensional distributions, which can be shown by the Cramer-Wald theorem  applying results presented here.

\section{Tree and Forest Priors}
Regression trees provide a piecewise constant reconstruction of the regression surface  $f_0$, where the pieces correspond to terminal nodes of recursive partitioning \citep{donoho}.
Before introducing the tree function classes, we need to define the notion of tree-shaped partitions.

Starting from a parent node $[0,1]^p$, a binary tree partition is obtained by successively applying a  splitting rule on a chosen internal node.
Each such internal node is divided into two  daughter cells with a split  along one of the $p$ coordinates at a chosen observed value. These daughter cells define two new rectangular subregions of $[0,1]^p$, which can be split further (to become internal nodes) or end up being  terminal nodes. The terminal cells after $K-1$ splits then yield a tree-shaped  partition $\mT=\{\Omega_k\}_{k=1}^K$ consisting of boxes $\Omega_k\subset[0,1]^p$. We denote with $\mV^K$ the set of all tree-shaped partitions that can be obtained by recursively splitting $K-1$ times at observed values $\mX\equiv\{\x_i\}_{i=1}^p$.

The tree functions can be then written as
\begin{equation}\label{tree}
f_{\mT,\b}(\x)=\sum_{k=1}^K\mathbb{I}(\x\in\Omega_k)\beta_k,
\end{equation}
where $\mT=\{\Omega_k\}_{k=1}^K\in\mV^K$ and where $\b=(\beta_1,\dots,\beta_K)'\in\R^{K}$ is a vector of jump sizes. Solitary trees are not as good performers as ensembles of trees/forests \citep{breiman2,bart}.
The forest mapping underpinning the BART method of \cite{bart} is the following sum-of-trees model indexed by a collection of $T$ tree-shaped partitions $\mE=[\mT^1,\dots,\mT^T]$   and step sizes $\mB=[\b^1,\dots,\b^T]$: 
\begin{equation}\label{forest}
f_{\mE,\mB}(\x)=\sum_{t=1}^Tf_{\mT^t,\b^t}(\x)\quad\text{for}\quad \mT^t\in\mV^{K^t}\quad\text{and}\quad \b^t\in\R^{K^t}.
\end{equation}
The prior distribution is assigned  over the class of forests
$$
\mF=\{ f_{\mE,\mB}(\x) \quad\text{of the form \eqref{forest} for some $\mE$ and $\mB$ and $T\in\N$} \}
$$ 
in a hierarchical manner. One first assigns a prior distribution over $T$ (or sets it equal  to some  value) and 
then  a prior over the tree-shaped partitions $\mT^t$  as well as  step sizes $\b^t$ for $1\leq t\leq T$.

\subsection{Tree Partition Priors $\pi(\mT)$}
In 1998,  there were two Bayesian CART developments that surfaced independently of each other: \cite{cart1} and \cite{cart2}.
Albeit related, the two methods differ in terms of the proposed tree partition prior $\pi(\mT)$.

The prior  of \cite{cart2} is equalitarian in the sense that trees with the same number of leaves are a-priori equally likely, regardless of their shape.
To prioritize smaller trees (that do not overfit), one assigns a complexity prior $\pi(K)$ on the tree size (i.e. the number of bottom nodes) $K$. They 
considered the Poisson distribution 
\begin{equation}\label{prior:K}
\pi(K)\,=\, \frac{\lambda^{K}}{(\e^\lambda-1)K!},\,K=1,2,\dots,\quad\text{for some}\quad \lambda>0.
\end{equation}
Given the tree size $K$, one assigns  a uniform prior over  tree topologies 
\begin{equation}\label{prior:partition}
\pi(\mT\C  K)=\frac{\1\left(\mT\in\mV^K\right)}{|\mV^K|},
\end{equation}
where   $|\mV^K|$ is the cardinality of $\mV^K$.
This prior can be straightforwardly  implemented using Metropolis-Hastings strategies  \citep{cart2} and was studied theoretically by \cite{rockova_vdp}.

The Bayesian CART  prior of \cite{cart1}, on the other hand, 
 specifies the prior implicitly as a tree-generating Galton-Watson (GW)  process. This process provides a mathematical representation of an evolving  population of individuals who  reproduce and die subject to laws of chance.
The tree growing process is characterized as follows. Starting with a root node $\Omega_{00}=[0,1]^p$, one decides to split each node $\Omega_{lk}$ into two children by flipping a coin. We are tacitly using the two-index numbering of nodes $(l,k)$,  where $l$ corresponds to the tree layer and $k$ denotes the $(k+1)^{st}$ left-most node at the $l^{th}$ layer. In order to prevent the trees from growing indefinitely, the success probability decays with $l$, making  bushy trees far less likely. Let us denote with $\gamma_{lk}\in\{0,1\}$  the  binary splitting indicator for whether or not a node $\Omega_{lk}$ was divided. \cite{cart1} assume
\begin{equation}\label{eq:split_prob}
\P(\gamma_{lk}=1)=p_{lk}=\frac{\alpha}{(1+l)^\delta}
\end{equation}
for some $\alpha\in(0,1)$ and $\delta>0$.  \cite{rockova_saha} propose a modification $p_{lk}=\alpha^{l}$ for some $1/n<\alpha<1$, which yields optimal posterior concentration in the $\ell_2$ sense. If the node $\Omega_{lk}$ splits (i.e. $\gamma_{lk}=1$), one chooses a splitting rule  by first picking a variable $j$ uniformly from available directions $\{1,\dots, p\}$ and
then picking a split point $c$ uniformly from available data values $x_{1j},\dots, x_{nj}$. 


Unlike in the homogeneous case (where all $\gamma_{lk}$'s are iid), \eqref{eq:split_prob} defines a {\sl heterogeneous} GW process where the offspring distribution is allowed to vary from generation to generation, i.e. the variables $\gamma_{lk}$ are independent but {\sl non-identical}.  Figure \ref{treeproba} depicts the split probabilities.

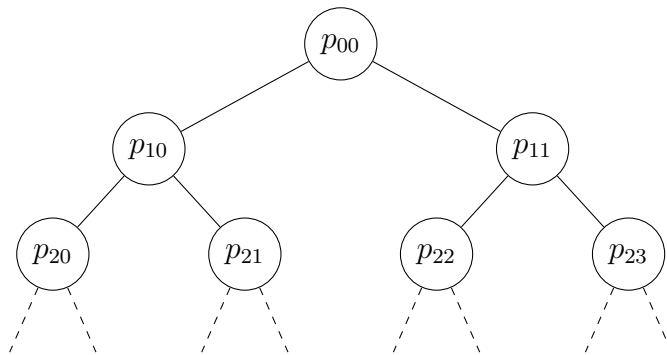
\begin{figure*}[h]
\begin{center}
\begin{tikzpicture}[
   level distance=1.4cm,sibling distance=1cm, 
   edge from parent path={(\tikzparentnode) -- (\tikzchildnode)}]
\Tree [.\node[draw,circle] {$p_{00}$}; 
    \edge node[auto=right] {}; 
    [.\node[draw,circle]{$p_{10}$};  
      \edge node[auto=right] {};  
      [.\node[draw,circle]{$p_{20}$}; 
      \edge[dashed]; {}
      \edge[dashed]; {}
      ]  
      \edge node[auto=left] {}; [.\node[draw,circle]{$p_{21}$}; 
      \edge[dashed]; {} 
      \edge[dashed]; {}
      ] 
          ]
     \edge node[auto=left] {};      
    [.\node[draw,circle]{$p_{11}$};
    \edge node[auto=right] {};  
      [.\node[draw,circle]{$p_{22}$}; 
       \edge[dashed]; {}
      \edge[dashed]; {}
      ] \edge node[auto=left] {}; 
    [.\node[draw,circle]{$p_{23}$};
     \edge[dashed]; {}
      \edge[dashed]; {}
     ] 
    ] ]
\end{tikzpicture}
\end{center}
\caption{Binary tree of prior cut probabilities in Bayesian CART by Chipman et al. (1998)}\label{treeproba}
\end{figure*}

\subsection{Priors on Jump Sizes $\pi(\b\C K)$}
After  partitioning the predictor space into $K$ nested rectangular cells, one needs to assign a prior on the presumed level of the outcome. Throughout this work, we denote with $\b=(\beta_1,\dots,\beta_K)'$ the vector of jump sizes associated with $K$ partitioning cells.
Both \cite{cart1} and \cite{cart2} assumed an independent product of  Gaussians 
$\pi(\b)\sim\prod_{k=1}^K\phi(\beta_k;\sigma^2)$ for some $\sigma^2>0$. \cite{cart3} argue, however, that the simple independence prior on the bottom leaf  coefficients $\beta_k$ may not provide enough structure. They claim that values $\beta_k$ that correspond to nearby cells in the predictor space should be more similar so that the prior incorporates local smoothness. They suggest a prior on bottom leaves  that aggregates priors on the ancestral internal nodes and, in this way, induces correlation among neighboring cells.
Motivated by these considerations, here we allow for  general correlation structures by assuming a multivariate Gaussian prior
\begin{equation}\label{eq:priorb}
\pi(\b\C K)\sim\mathcal{N}_K(\bm 0,\Sigma),\quad\text{with}\quad \lambda_{min}(\Sigma)>c>0\quad\text{and}\quad \lambda_{max}(\Sigma)\lesssim n
\end{equation}
where $\Sigma$ is some $K\times K$ positive definite matrix. We also consider an independent product of Laplace priors (which was not yet studied in this context)
\begin{equation}\label{eq:priorb_laplace}
\pi(\b\C K)=\prod_{k=1}^K \psi(\beta_k;\lambda),
\end{equation}
where $\psi(\beta;\lambda)=\lambda/2\e^{-\lambda|\beta|}$ for some $\lambda>0$.


 \section{Simple One-dimensional Scenario}\label{sec:semi}
To set the stage for our development, we start with a simple toy scenario where (a) $p=1$, (b)  $K$ is regarded as  fixed,  and (c) when there  is {\sl only one} partition $\mT=\{\Omega_k\}_{k=1}^K$ consisting of $K$ {\sl equivalent blocks}.
The  equivalent blocks partition $\mT=\{\Omega_k\}_{k=1}^K$ \citep{anderson1966} comprises   $K$ intervals $\Omega_k$ with roughly equal number of observations in them, i.e. $\mu(\Omega_k)\equiv\frac{1}{n}\sum_{i=1}^n\mathbb{I}(\x_i\in\Omega_k)\asymp 1/K$. For the sake of simplicity, we will also  assume that the data points lie on a regular grid, where $x_i=i/n$ for $1\leq i\leq n$ in which case the intervals $\Omega_k$ have also roughly equal lengths.  This setup was studied previously by \cite{vdp_rockova} in the study of regression histograms.  We relax this assumption in the next section. We denote the class of approximating functions as
 \begin{equation}\label{prior1}
 \mF[K]=\left\{f^K_{\b}:[0,1]^p\rightarrow\R; f^K_{\b}(\x)=\sum_{k=1}^K\1(\x\in\Omega_k)\beta_k\right\},
 \end{equation}
 where we have omitted the subscript $\mT$  and  highlighted the dependence on $K$.  We denote with $\Pi^K(f)$ the prior distribution over $\mF[K]$, obtained by assigning  the prior \eqref{eq:priorb} or \eqref{eq:priorb_laplace}. 
 To further simplify the notation, we will drop the subscript $\b$ and write $f^K$ when referring to the elements of $\mF[K]$.  
 
 The aim is to study the posterior distribution of $\Psi(f^K)$ and to derive conditions under which
 \begin{equation}\label{statement}
 \Pi[ \sqrt{n}(\Psi(f^K)-{\Psi}_n)\C\Y^{(n)}]
 \end{equation}
converges to a normal distribution (in $\P_0^n$ probability) with mean zero and variance $V_0=\|a\|_L^2$, where
 $\Psi_n$ is a random centering, distributed according to a Gaussian distribution with mean $\Psi(f_0)$ and variance $V_0$.

 Using  the fact that convergence of Laplace transforms for all $t$ in probability implies convergence in distribution in probability (Section 1  of \cite{castillo_rous})
this BvM statement holds when $\forall t\in\R$
\begin{equation}\label{laplace}
 \E^{\Pi}[\e^{t\sqrt{n}(\Psi(f^K)-\wh\Psi_K)}\C\Y^{(n)}]\rightarrow \e^{\frac{t^2}{2}\|a\|_L^2}\quad\text{in $\P_0^n$ probability as $n\rightarrow\infty$},
 \end{equation}
where $\wh\Psi_K$ is a linear efficient estimator of $\Psi(f_0)$ such that
 $$ 
 \sqrt{n}(\wh\Psi_K -\Psi_n)=o_P(1).
 $$
In order to show  \eqref{laplace}, we first need to introduce some notation.
Let $a^K$  be the projection of $a$ onto $\mF[K]$, i.e.
$$
a^K(\x)=\sum_{k=1}^K\1(\x\in\Omega_k)a^K_k\quad\text{with}\quad a_{k}^K=\sum_{i=1}^n\frac{\1(\x_i\in\Omega_k)}{n}\frac{a(\x_i)}{\mu(\Omega_k)},
$$
where   $\mu(\Omega_k)$ was defined above as $\mu(\Omega_k)=\frac{1}{n}\sum_{i=1}^n\1(\x_i\in\Omega_k)$. Similarly, we denote with $f_0^K=\sum_{k=1}^K\1(\x\in\Omega_k)\beta^K_k$ the projection of $f_0$ onto $\mF[K]$
with jump sizes $\b^K=(\beta_1^K,\dots,\beta_K^K)'$.
Next, we define \begin{align*}
\wh\Psi_K&=\Psi\left(f_{0}^{K}\right)+\frac{W_n(a^K)}{\sqrt n}\quad \text{and}\quad \Psi_n=\Psi\left(f_{0}\right)+\frac{W_n(a)}{\sqrt n}.
\end{align*}

The following Theorem characterizes the BvM property when $K$ is sufficiently large and when $\alpha$ is known.

\begin{theorem}\label{thm1}
Assume the model \eqref{model} with $p=1$, where $f_0$ is endowed with a prior on $\mF[K]$ in \eqref{prior1}  induced by \eqref{eq:priorb}.
Assume $f_0\in\Ha_1$ and $a\in\Haa_1$ for $1/2<\alpha\leq 1$ and $\gamma>1/2$.  With the choice $K=K_n=\lfloor (n/\log n)^{1/(2\alpha+1)}\rfloor$, we have 
$$
\Pi\left(\sqrt{n}\left(\Psi(f^K)-\wh\Psi_K\right)\C\Y^{(n)}\right)\rightsquigarrow \mathcal{N}(0,\|a\|_L^2)
$$
in $\P_0^n$-probability as $n\rightarrow\infty$.
\end{theorem}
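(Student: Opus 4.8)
\emph{Proof idea.}\quad The plan is to exploit the fact that, in this toy setup, the model is a finite-dimensional conjugate Gaussian model, so everything is available in closed form. Since $p=1$, $K$ is fixed, and there is a single equivalent-blocks partition $\mathcal{T}=\{\Omega_k\}_{k=1}^K$, the map $\bm\beta\mapsto f^K_{\bm\beta}$ is linear and the block averages $\bar Y_k=\tfrac1{n_k}\sum_{i:\x_i\in\Omega_k}Y_i$, with $n_k=n\,\mu(\Omega_k)\asymp n/K$, are sufficient: up to an additive constant $\ell_n(f^K_{\bm\beta})=-\tfrac12\sum_{k=1}^K n_k(\beta_k-\bar Y_k)^2$. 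Combined with the prior \eqref{eq:priorb}, the posterior for $\bm\beta$ is exactly Gaussian,
\[
\bm\beta\mid\Y^{(n)}\sim\mathcal{N}_K\bigl(\widehat\mu_n,\widehat\Sigma_n\bigr),\qquad \widehat\Sigma_n=(\Sigma^{-1}+\bm D)^{-1},\qquad \widehat\mu_n=\widehat\Sigma_n\bm D\,\bar{\bm Y},
\]
where $\bm D=\mathrm{diag}(n_1,\dots,n_K)$ and $\bar{\bm Y}=(\bar Y_1,\dots,\bar Y_K)'$. Writing the functional as the linear form $\Psi(f^K_{\bm\beta})=\bm v'\bm\beta$ with $v_k=\tfrac1n\sum_{i:\x_i\in\Omega_k}a(\x_i)$, a short computation using the definitions of $f_0^K$ and $a^K$ identifies the centering as $\widehat\Psi_K=\bm v'\bar{\bm Y}$. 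Hence the posterior law of $\tau_n=\sqrt n(\Psi(f^K)-\widehat\Psi_K)$ is itself exactly the Gaussian $\mathcal{N}(m_n,s_n^2)$ with
\[
m_n=\sqrt n\,\bm v'(\widehat\Sigma_n\bm D-\bm I)\bar{\bm Y}=-\sqrt n\,\bm v'(\Sigma^{-1}+\bm D)^{-1}\Sigma^{-1}\bar{\bm Y},\qquad s_n^2=n\,\bm v'\widehat\Sigma_n\bm v .
\]
Since the bounded-Lipschitz distance between $\mathcal{N}(m_n,s_n^2)$ and $\mathcal{N}(0,\|a\|_L^2)$ is controlled by $|m_n|+\bigl|\,s_n-\|a\|_L\,\bigr|$ (and \eqref{laplace} is likewise immediate once the posterior is recognized as Gaussian), the whole statement reduces to showing $s_n^2\to\|a\|_L^2$ and $m_n=o_P(1)$ in $\P_0^n$-probability.

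\emph{Variance.}\quad Here I would use the identity $\widehat\Sigma_n=\bm D^{-1}-(\Sigma^{-1}+\bm D)^{-1}\Sigma^{-1}\bm D^{-1}$. Its leading term gives $n\,\bm v'\bm D^{-1}\bm v=\sum_{k=1}^K\mu(\Omega_k)\bar a_k^2$ with $\bar a_k=\tfrac1{n_k}\sum_{i\in\Omega_k}a(\x_i)$; since $a\in\Haa_1$ and each block is an interval of length $\asymp 1/K$, the within-block variance obeys $\tfrac1{n_k}\sum_{i\in\Omega_k}a(\x_i)^2-\bar a_k^2\lesssim K^{-2\gamma}$, so this term equals $\|a\|_L^2+O(K^{-2\gamma})$. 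For the remainder, Weyl's inequality yields $\lambda_{min}(\Sigma^{-1}+\bm D)\ge\lambda_{min}(\bm D)=\min_k n_k\gtrsim n/K$, and together with $\|\Sigma^{-1}\|_{\mathrm{op}}=1/\lambda_{min}(\Sigma)\le 1/c$ from \eqref{eq:priorb} and $\|\bm v\|_2^2\lesssim\|a\|_\infty^2/K$ one obtains $n\,\bigl|\bm v'(\widehat\Sigma_n-\bm D^{-1})\bm v\bigr|\lesssim K/n\to0$. Hence $s_n^2\to\|a\|_L^2$.

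\emph{Bias.}\quad For $m_n$ I would decompose $\bar{\bm Y}=\bm\beta^K+\bar{\bm\varepsilon}$, where $\bm\beta^K=(\beta^K_1,\dots,\beta^K_K)'$ collects the projection coefficients of $f_0$ and $\bar{\bm\varepsilon}$ has mean zero and covariance $\bm D^{-1}$. The deterministic part satisfies
\[
\bigl|\sqrt n\,\bm v'(\Sigma^{-1}+\bm D)^{-1}\Sigma^{-1}\bm\beta^K\bigr|\le\sqrt n\,\|\bm v\|_2\,\lambda_{min}(\bm D)^{-1}c^{-1}\|\bm\beta^K\|_2\lesssim\sqrt n\cdot K^{-1/2}\cdot\tfrac Kn\cdot K^{1/2}=\tfrac K{\sqrt n},
\]
which vanishes because $\alpha>1/2$ forces $K=\lfloor(n/\log n)^{1/(2\alpha+1)}\rfloor=o(\sqrt n)$. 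The stochastic part $-\sqrt n\,\bm v'(\Sigma^{-1}+\bm D)^{-1}\Sigma^{-1}\bar{\bm\varepsilon}$ is centered Gaussian with variance $\le n\,\|\bm v\|_2^2\,\lambda_{min}(\bm D)^{-2}c^{-2}\,\|\bm D^{-1}\|_{\mathrm{op}}\lesssim K^2/n^2\to0$, hence $o_P(1)$. Thus $m_n=o_P(1)$ and the displayed weak convergence follows. Finally, to confirm the companion requirement $\sqrt n(\widehat\Psi_K-\Psi_n)=o_P(1)$ (so that $\widehat\Psi_K$ is an efficient centering), I would write $\sqrt n(\widehat\Psi_K-\Psi_n)=\sqrt n\bigl(\Psi(f_0^K)-\Psi(f_0)\bigr)+\bigl(W_n(a^K)-W_n(a)\bigr)$; the first term equals $-\sqrt n\sum_k\mu(\Omega_k)\,\mathrm{cov}_k(a,f_0)$, with $\mathrm{cov}_k$ the within-block empirical covariance, so it is of order $\sqrt n\,K^{-(\alpha+\gamma)}\to0$ precisely because $\gamma>1/2$, while the second is Gaussian with variance $\tfrac1n\sum_i(a^K(\x_i)-a(\x_i))^2\le K^{-2\gamma}\to0$.

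\emph{Main obstacle.}\quad I expect the delicate point to be the deterministic prior-bias term $\sqrt n\,\bm v'(\Sigma^{-1}+\bm D)^{-1}\Sigma^{-1}\bm\beta^K$: this is exactly the ``bias in the centering'' effect that can destroy the BvM, and taming it is what couples the lower eigenvalue bound on $\Sigma$ (so the prior precision cannot outweigh the Fisher information $\bm D$), the cell masses $\min_k n_k\asymp n/K$, and the undersmoothing-type requirement $\alpha>1/2$ that keeps $K=o(\sqrt n)$. The second subtlety, of the same flavour, is that the projection bias $\Psi(f_0^K)-\Psi(f_0)$ is negligible at the $1/\sqrt n$ scale only because $\gamma>1/2$; everything else (the exact Gaussian form of the posterior, the variance limit, the stochastic remainders) is routine once the conjugacy is set up.
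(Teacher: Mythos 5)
Your proposal is correct, but it takes a genuinely different route from the paper. The paper follows the Castillo--Rousseau template: it first establishes the no-bias step $\sqrt{n}|\Psi_n-\wh\Psi_K|=o_P(1)$ (as you do at the end), then studies the posterior Laplace transform $\E^{\Pi}[\e^{t\sqrt{n}(\Psi(f^K)-\wh\Psi_K)}\C\Y^{(n)}]$ via a change of measure $\beta_k\mapsto\beta_k-t a_k^K/\sqrt{n}$ in the prior, controlling the resulting Radon--Nikodym factor $\exp\{\tfrac{t^2}{2n}\a^{K\prime}\Sigma^{-1}\a^K-\tfrac{t}{\sqrt n}\a^{K\prime}\Sigma^{-1}\b\}$ on a high-posterior-probability set $A_{n,K}(M)=\{\|f^K-f_0^K\|_L\leq M\sqrt{K\log n/n}\}$, whose posterior mass requires a separate Kleijn--van der Vaart concentration argument. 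You instead exploit exact conjugacy: the posterior of $\bm\beta$ is exactly Gaussian, the posterior law of $\sqrt{n}(\Psi(f^K)-\wh\Psi_K)$ is exactly $\mathcal{N}(m_n,s_n^2)$, and the theorem reduces to $m_n=o_P(1)$ and $s_n^2-\|a\|_L^2\to0$, which you verify correctly (your identification $\wh\Psi_K=\bm v'\bar{\bm Y}$ is right, and the eigenvalue bounds $\lambda_{min}(\Sigma)>c$, $\min_k n_k\gtrsim n/K$ together with $K=o(\sqrt n)$ for $\alpha>1/2$ deliver exactly the same constraints as the paper's bound \eqref{schwartz}). What each approach buys: yours is more elementary, bypasses the posterior-concentration step on $A_{n,K}$ entirely, and yields exact finite-sample expressions for the posterior bias and spread; the paper's change-of-measure argument is less explicit here but is the one that survives when conjugacy is lost --- it extends verbatim to the Laplace prior (Section \ref{sec:laplace_proof}) and, more importantly, to the adaptive setting of Theorem \ref{thm3}, where the partition is random and the posterior is a mixture for which no closed form exists. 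Both proofs hinge on the same two quantitative facts, $\sqrt{n}K^{-(\alpha+\gamma)}\to0$ (hence $\gamma>1/2$) and $K\sqrt{\log n/n}\to0$ (hence $\alpha>1/2$), so the regularity restrictions you obtain coincide with the paper's.
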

\proof Section \ref{sec:proof:thm1}

\smallskip
\begin{remark}
Theorem \ref{thm1} applies to the so-called {\sl symmetric dyadic trees}.  In particular, when $n=2^r$ for some $r>0$, the equivalent blocks partition with $K=2^L$ cells  can be represented with a symmetric full binary tree which splits all the nodes at dyadic rationals up to a resolution $L$.
\end{remark}

Theorem \ref{thm1} is related to Theorem 4.2 of \cite{castillo_rous} for density estimation with non-adaptive histogram priors. The proof here also requires two key ingredients. The first one  is the construction of a prior which does not change too much under the change of measures from $f^K$ to  $f_t^K$, where $f_t^K$ is a step function  with shifted step sizes $\beta_k^t=\beta_k-\frac{ta_k^K}{\sqrt{n}}$.
This property holds for (correlated) Gaussian step heights and is safely satisfied by other prior distributions with heavier tails.
\begin{remark}
Theorem \ref{thm1} holds for Laplace product prior $\pi(\b\C K)=\prod_{k=1}^K\psi(\beta_k,\lambda)$ (as shown in Section \ref{sec:laplace_proof}). It is interesting to note that under the Laplace prior, one can obviate the need for showing posterior concentration around a projection of $f_0$ onto $\mF[K]$, which is needed for the Gaussian case.
\end{remark}
The second crucial ingredient (as in the Proposition 1 of \cite{castillo_rous}) is the so-called {\sl no bias condition}:
\begin{equation}\label{eq:nobias}
\sqrt{n}\langle a-a^K,f_0-f_0^K\rangle_L=o(1).
\end{equation}
This condition vaguely reads as follows: one should be able to approximate {\sl simultaneously} $a(\cdot)$ and $f_0(\cdot)$  well enough using  functions in the inferential class $\mF[K]$.   Using the Cauchy-Schwartz inequality and Lemma 3.2 of \cite{rockova_vdp}, this condition will be satisfied when $\sqrt{n}K^{-(\alpha+\gamma)}\rightarrow 0$. Choosing $K=K_n=\lfloor (n/\log n)^{1/(2\alpha+1)}\rfloor$, \eqref{eq:nobias} holds as long as $\gamma>1/2$.  Different choices of $K_n$, however, would imply different restrictions on $\alpha$ and $\gamma$.  
The no-bias condition thus enforces certain  limitations  on  the regularities $\alpha$ and $\gamma$.
This poses challenges for adaptive priors  that only adapt to the regularity of $f_0$, which may not be necessarily  similar to the regularity of $a$. This phenomenon has been coined as the {\sl curse of adaptivity} \citep{castillo_rous}.

\section{Overcoming the Curse of Adaptivity}\label{sec:semi:adaptive}
The dependence of $K_n$ on $\alpha$ makes the result in Theorem \ref{thm1}  somewhat theoretical. In practice, it is common to estimate the regularity from the data  using, e.g., a hierarchical Bayes method, which  treats both $K$ and the partition $\mT$  as unknown with a prior. This fully Bayes model brings us  a step closer to the actual Bayesian CART and BART priors.
Treating both $K$ and $\mT$ as random and assuming $T=1$, the class of approximating functions now  constitutes a union of shells
$$
 \mF=\bigcup_{K=1}^\infty \mF[K],
$$
where each shell  
$$
\mF[K]= \bigcup_{\mT\in\mV^K}\mF[\mT],
$$ 
itself is a union of sets $\mF[\mT]=\{f_{\mT,\b} \quad\text{of the form \eqref{tree} for some $\b\in\R^{K}$}\}$. The sets $\mF[\mT]$ collect all step functions that grow on the same  tree partition $\mT\in\mV^K$.

As mentioned in \cite{castillo_rous}, obtaining BvM in the case of random $K$ is case dependent. 
As the prior typically adapts to the regularity of $f_0$, the no-bias condition \eqref{eq:nobias} may not be satisfied if the regularities of $a$ and $f_0$ are too different. The adaptive prior can be detrimental in such scenarios,  inducing a non-vanishing bias  in the centering of the posterior distribution (see \cite{castillo_bias} or \cite{riv_rous}). Roughly speaking, one needs to make sure that the prior supports large enough $K$ values and sufficiently regular partitions $\mT$ so that $f_0$ and $a$ can be both safely approximated.
To ensure this behavior,  we enforce a signal strength assumption  through self-similarity requiring that the function $f_0$ ``does not appear smoother than it actually is" \citep{gine_nickl}. Such qualitative assumptions are natural and  necessary for obtaining  adaptation properties  of confidence bands  \citep{picard,bull, gine_nickl2,nickl_szabo,ray}.

\subsection{Self-similarity}\label{sec:selfsim}
Various self-similarity conditions have been considered  in various estimation settings, including the supremum-norm loss  \citep{bull, gine_nickl2,hoffman_nickl} as well as the $\ell_2$ loss 
\citep{nickl_szabo, szabo_etal}. In the multi-scale analysis, the term self-similar coins ``desirable truths" $f_0$ that are not so difficult to estimate since their regularity looks similar at small and large scales. \cite{gine_nickl2} argue that such self-similarity is a reasonable modeling assumption and  \cite{bull} shows the set of self-dissimilar  H\"{o}lder functions  (in the $\ell_\infty$ sense) is negligible. \cite{szabo_etal} provided an $\ell_2$-style self-similarity restriction on a Sobolev parameter space. \cite{nickl_szabo} weakened this condition and showed that it ``cannot be improved upon"  and that the statistical complexity of the estimation problem does not decrease quantitatively under self-similarity  in Sobolev spaces.

We consider a related notion of $\ell_2$ self-similar classes within the context of fixed-design regression as opposed to the  asymptotically equivalent white noise model.
To this end, let us first  formalize the notion of the cell size in terms of the local  spread of the data and introduce the partition {\sl diameter} \citep{verma2009,rockova_vdp}.
 


\begin{definition}(Diameter)
Denote by $\mT=\{\Omega_k\}_{k=1}^K$  a partition of $[0,1]^p$ and by $\mX=\{\x_1,\dots,\x_n\}$ a collection of data points in $[0,1]^p$.  We define a {\sl diameter} of $\Omega_k$  as 
$$
\mathrm{diam}\left(\Omega_k\right)=\max\limits_{\substack{\x,\y\in\Omega_k\cap\mX}}\|\x- \y\|_2.
$$
and with
$
\mathrm{diam}\left(\mT\right)=\sqrt{\sum_{k=1}^K\mu(\Omega_k)\mathrm{diam}^2\left(\Omega_k\right)}
$
we define a {\sl  diameter} of the entire partition $\mT$ where $\mu(\Omega_k)=\frac{1}{n}\sum_{i=1}^n\mathbb{I}(\x_i\in\Omega_k)=n(\Omega_k)/n$.
\end{definition}

Here, we do not require that the design points are strictly  on a grid as long as they are regular according to Definition 3.3 of \cite{rockova_vdp}.
We define regular datasets  below.  First we introduce the notion of the $k$-$d$ tree \citep{bentley1975}. 
Such a partition $\wh{\mT}$ is constructed by cycling over coordinate directions in $\mS=\{1,\dots,p\}$, where all nodes at the same level are split along the same axis. For a given direction $j\in\mS$, each  internal node, say $\Omega_k^\star$, will be split  at a median of the point set (along the $j^{th}$ axis). This split will pass $\lfloor \mu(\Omega_k^\star) n/2\rfloor$ and $\lceil \mu(\Omega_k^\star) n/2\rceil$ observations onto its two children, thereby roughly halving the number of points. 
After $s$ rounds of splits on each variable, all  $K$ terminal nodes have at least $\lfloor n/K \rfloor$ observations, where $K=2^{s\, |\mS|}$. 

We now define regular datasets in terms of the $k$-$d$ tree partition.

\begin{definition}\label{def:regular}
Denote by $\wh{\mT}=\{\wh{\Omega}_k\}_{k=1}^K\in\mV^K$ the $k$-$d$ tree where   $K=2^{s\, p}$. We say that a dataset $\mX\equiv\{\x_i\}_{i=1}^n$ is  regular if
\begin{equation}\label{eq:design_regular}
\max\limits_{1\leq k\leq K}\mathrm{diam}(\wh{\Omega}_k)<M\, \sum_{k=1}^K\mu(\wh{\Omega}_k)\mathrm{diam}(\wh{\Omega}_k)
\end{equation}
for some large enough constant $M>0$ and all  $s\in\N\backslash\{0\}$.
\end{definition}
The definition states  that in a regular dataset,  the maximal diameter  in the $k$-$d$ tree partition should not be much larger than a ``typical" diameter.


\begin{definition}
We say that the function $f_0\in\Ha_p$ is {\sl self-similar}, if  there exists constant $M>0$ and $D>0$ such that
\begin{equation}\label{selfsim2}
\|{f}^0_{\mT}-f_0\|_L^2\geq M \mathrm{diam}(\mT)^{2\alpha}\quad\text{for all}\quad \mT\in\mV^K\quad\text{such that}\quad \mathrm{diam}(\mT)\leq D ,
\end{equation}
where ${f}^0_{\mT}$ is the the $\|\cdot\|_L$  projection of $f_0$ onto $\mF[\mT].$
\end{definition}

We can relate the assumption \eqref{selfsim2}  to  the notion of self-similarity in  the Remark 3.4 of \cite{szabo_etal}. To see this connection, assume for now the equivalent block partition $\mT$  from Section \ref{sec:semi}, whose diameter  $\mathrm{diam}(\mT)$ is roughly $1/K$ when the design points lie on a regular grid. 
The study of regression histograms with $K=2^L$ equivalent blocks under a fixed regular design is statistically equivalent to the multi-scale analysis of Haar wavelet coefficients up to the resolution $L-1$ in the white noise model. The projected model onto the Haar basis can be written as 
 $$
 Y_{lk}=f_{lk}^0+\frac{1}{\sqrt{n}}\varepsilon_{lk}\quad\text{for}\quad 0\leq l< L\quad\text{and}\quad 0\leq k<2^l,
 $$ where $\varepsilon_{lk}\iid\mathcal{N}(0,1)$  and where $f_{lk}^0$ are the wavelet coefficients  indexed by the  resolution  level $l$ and the shift index $k$.
 The speed of decay of $f_{lk}^0$ determines the statistical properties of $f_0$, where $\alpha$-H\"{o}lder continuous functions satisfy $|f_{lk}^0|\lesssim 2^{-l(\alpha+1/2)}$. Assuming the equivalent blocks partition $\mT$ with $K=2^L$ blocks,  the condition in the Remark 3.4 of  \cite{szabo_etal} writes as follows: there exists $K_0\in\N$ such that $\forall K\geq K_0$ we have
 $$
 \int_0^1|{f}^0_{\mT}(x)-f_0(x)|^2\d x=\sum\limits_{l\geq L}\sum\limits_{0\leq k<2^l}(f_{lk}^0)^2\geq M K^{-2\alpha}\asymp \mathrm{diam}(\mT)^{2\alpha}.
 $$ 
 The first equality above  stems from the orthogonality of the Haar bases. In this vein, \eqref{selfsim2} can be regarded a generalization of this condition to imbalanced partitions and fixed design under the norm $\|\cdot\|_L$.

To get even more insight into \eqref{selfsim2} in  fixed design regression, we take a closer look at the approximation gap.
We have 
$$
\|f_0-f^{0}_\mT\|_L^2
={\sum_{k=1}^{K}\mu(\Omega_{k})\Var[f_0\C\Omega_{k}]},
$$
where
$$
\Var[f_0\C\Omega_k]=\frac{1}{n(\Omega_k)}\sum_{\x_i\in\Omega_k}\left(f_0(\x_i)-\frac{1}{n(\Omega_k)}\sum_{\x_i\in\Omega_k}f_0(\x_i)\right)^2
$$
is the local variability of the function $f_0$ inside $\Omega_k$.  The function $f_0$ will be self-similar when the variability  inside each cell $\Omega_k$'s is large enough to be detectable, i.e.
$$
\inf_{1\leq k\leq K}\Var[f_0\C\Omega_k]>M\mathrm{diam}^{2\alpha}(\mT)\quad\text{for all}\quad  \mT=\{\Omega_k\}_{k=1}^K\in\mV^K
$$
such that $\mathrm{diam}(\mT)\leq D$ for some $D>0$. From the definition of the partition diameter, it turns out that this will be satisfied when $\Var[f_0\C\Omega_k]>\mathrm{diam}^{2\alpha}(\Omega_k)$ for all $1\leq k\leq K$ and $\mT=\{\Omega_k\}_{k=1}^K\in\mV^K.$
Functions that are nearly constant in long intervals will not satisfy this requirement.
The premise of self-similar functions is that their signal should be detectable with partitions $\mT$ that undersmooth the truth. 

In addition, it follows from the proof of Lemma 3.2 of \cite{rockova_vdp} that $\|{f}^0_{\mT}-f_0\|_L^2\lesssim \mathrm{diam}(\mT)^{2\alpha}$.
 The lower bound in \eqref{selfsim2} thus matches the  upper bound making the approximation  error behave similarly across partitions with similar diameters, essentially identifying the smoothness.

Based on these considerations, we introduce the notion of {\sl regular}  partitions that are not too rough in the sense that their diameters shrink sufficiently fast.
\begin{definition}
For some $M>0$ and for some arbitrarily slowly increasing sequence $M_n\rightarrow\infty$, we denote 
\begin{equation}\label{k_n}
d_n(\alpha)\equiv (M_n/M)^{1/\alpha}{n^{-1/(2\alpha+p)}}{\log^{1/2\alpha}n}.
\end{equation}
A tree partition $\mT\in\mV^K$ is said to be $n$-{\sl regular} for a given $n\in\N$  if 
$$
\mathrm{diam}(\mT)\leq d_n(\alpha).
$$
We denote the subset of all $n$-regular partitions with $\mR_n$.
\end{definition}
The following Lemma states that,
when $f_0$ is self-similar, the posterior concentrates on partitions that are not too complex or irregular.

\begin{lemma}\label{lemma1}
Assume  that $f_0\in\mathcal{H}_p^\alpha$ is self-similar,  $p\lesssim \sqrt{\log n}$ and that the design $\mX\equiv\{\x_i\}_{i=1}^n$ is regular. Under the Bayesian CART prior (\eqref{prior:K} and \eqref{prior:partition} or \eqref{eq:split_prob}) with Gaussian or Laplace step heights (\eqref{eq:priorb} or \eqref{eq:priorb_laplace}) we have
$$
\Pi\left(\{\mT\notin \mR_n\}\cup\{\mT\in\mV^K:K> K_n \}\C\Y^{(n)}\right)\rightarrow 0
$$
in $\P_0^n$-probability as $n,p\rightarrow\infty$, where $\mR_n$ are all regular partitions and $K_n=M_2\lfloor n\varepsilon_n^2/\log n\rfloor\asymp (n/\log n)^{p/(2\alpha+p)}$ for some $M_2>0$.
\end{lemma}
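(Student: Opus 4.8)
The plan is to deduce Lemma \ref{lemma1} from two facts, both variants of the posterior-contraction machinery already available for tree/forest priors: (a) contraction of the posterior towards $f_0$ at the near-minimax rate $\varepsilon_n\asymp(n/\log n)^{-\alpha/(2\alpha+p)}$ in the empirical norm $\|\cdot\|_L$, and (b) concentration of the posterior on the complexity sieve $\mathcal F_n=\{f_{\mathcal E,\mathcal B}:K\leq K_n\}$. Granting (a) and (b), the restriction to $n$-regular partitions is delivered by the \emph{self-similarity} of $f_0$: a partition's diameter is detectable from the size of its approximation error, so a posterior that is $\|\cdot\|_L$-close to $f_0$ cannot charge partitions with $\mathrm{diam}(\mathcal T)>d_n(\alpha)$. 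I would first establish (a)--(b) together, then run the self-similarity argument, and finish with a union bound.

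For (a)--(b) I would verify the three conditions of the general posterior-contraction theorem, tracking the dependence on $p$. \emph{Prior mass:} taking as approximating partition the $k$-$d$ tree $\wh{\mathcal T}$ at resolution $K\asymp K_n$, whose diameter on a regular design (Definition \ref{def:regular}) is $\lesssim\varepsilon_n^{1/\alpha}$ up to a $\mathrm{poly}(p)$ factor, and a ball of radius $\asymp\varepsilon_n$ around the projected step heights $\b^K$, the lower bound $\lambda_{min}(\Sigma)>c$ together with $\lambda_{max}(\Sigma)\lesssim n$ for \eqref{eq:priorb} (respectively the uniform positivity of $\psi(\cdot;\lambda)$ on bounded sets for \eqref{eq:priorb_laplace}) yields $\Pi(\|f-f_0\|_L\lesssim\varepsilon_n,\ \text{KL-type conditions})\geq e^{-c_1 n\varepsilon_n^2}$. \emph{Entropy:} since $\log|\mathcal V^K|\lesssim K\log(np)$ and each $\mathcal F[\mathcal T]$ admits an $\varepsilon_n$-net of cardinality $(C/\varepsilon_n)^K$ in $\R^K$, one gets $\log N(\varepsilon_n,\mathcal F_n,\|\cdot\|_L)\lesssim K_n\log(np)\lesssim K_n\log n\asymp n\varepsilon_n^2$, where $p\lesssim\sqrt{\log n}$ is exactly what keeps $\log(np)\asymp\log n$. \emph{Sieve complement:} $\Pi(K>K_n)\leq\sum_{K>K_n}\pi(K)\lesssim e^{-c_2 K_n\log K_n}$ for the complexity prior \eqref{prior:K}; for the Galton--Watson prior \eqref{eq:split_prob} (and its $p_{lk}=\alpha^l$ variant) the same bound follows from a tail estimate for the total progeny of the heterogeneous branching tree. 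Because $\log K_n\asymp\log n$, this dominates $e^{-c_1 n\varepsilon_n^2}=e^{-c_1 K_n\log n}$ once $K_n=M_2\lfloor n\varepsilon_n^2/\log n\rfloor$ with $M_2$ large, so the contraction theorem gives both $\Pi(\|f-f_0\|_L>M_n\varepsilon_n\C\Y^{(n)})\to0$ for a slowly growing $M_n\to\infty$ and $\Pi(K>K_n\C\Y^{(n)})\to0$ in $\P_0^n$-probability.

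It remains to convert the $\|\cdot\|_L$-bound into the diameter bound. Fix $f=f_{\mathcal T,\b}\in\mathcal F[\mathcal T]$ with $\mathcal T\notin\mathcal R_n$, i.e. $\mathrm{diam}(\mathcal T)>d_n(\alpha)$. Since ${f}^0_{\mathcal T}$ is the $\|\cdot\|_L$-projection of $f_0$ onto $\mathcal F[\mathcal T]$, $\|f-f_0\|_L\geq\|{f}^0_{\mathcal T}-f_0\|_L$. If $d_n(\alpha)<\mathrm{diam}(\mathcal T)\leq D$, self-similarity \eqref{selfsim2} gives $\|f-f_0\|_L^2\geq M\,\mathrm{diam}(\mathcal T)^{2\alpha}>M\,d_n(\alpha)^{2\alpha}=(M_n^2/M)\,n^{-2\alpha/(2\alpha+p)}\log n$, which exceeds $M_n^2\varepsilon_n^2$ for $n$ large because $n^{-2\alpha/(2\alpha+p)}\log n/\varepsilon_n^2=(\log n)^{p/(2\alpha+p)}\to\infty$. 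If instead $\mathrm{diam}(\mathcal T)>D$, then ${f}^0_{\mathcal T}$ is a coarse piecewise-constant fit and a self-similar $f_0$ is bounded away from every constant at scale $D$ (applying \eqref{selfsim2} along a fine refinement of the cell carrying the bulk of the diameter), so $\|f-f_0\|_L^2\geq c_0>0>M_n^2\varepsilon_n^2$ eventually. In either case $f\in\{\|f-f_0\|_L>M_n\varepsilon_n\}$, whence $\Pi(\mathcal T\notin\mathcal R_n\C\Y^{(n)})\leq\Pi(\|f-f_0\|_L>M_n\varepsilon_n\C\Y^{(n)})\to0$; combining with $\Pi(K>K_n\C\Y^{(n)})\to0$ via a union bound proves the lemma.

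I expect the \textbf{main obstacle} to be the sieve-complement / effective-dimension control for the Galton--Watson partition prior \eqref{eq:split_prob}: bounding the number of leaves of a heterogeneous branching tree sharply enough that the resulting $K_n$ matches $(n/\log n)^{p/(2\alpha+p)}$, and doing so uniformly in the growing dimension $p$, with $p\lesssim\sqrt{\log n}$ used precisely to keep $\log(np)\asymp\log n$ in the entropy and prior-mass calculations. The self-similarity step is the conceptually novel ingredient but technically light, apart from the mild ``$\mathrm{diam}(\mathcal T)>D$'' corner case.
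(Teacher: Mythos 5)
Your proposal is correct and follows essentially the same route as the paper: posterior contraction at rate $\varepsilon_n$ in $\|\cdot\|_L$ (together with the effective-dimension bound $K\leq K_n$), followed by the observation that self-similarity forces $\|f-f_0\|_L\geq\|f_0^{\mT}-f_0\|_L\gtrsim M_n\varepsilon_n$ on any partition with $\mathrm{diam}(\mT)>d_n(\alpha)$, so such partitions are excluded by contraction. The paper simply cites the contraction and dimension results from Ro\v{c}kov\'{a}--van der Pas and Ro\v{c}kov\'{a}--Saha (extending only the Laplace case in its appendix) rather than re-verifying prior mass, entropy and sieve conditions as you sketch, and it does not separately treat your $\mathrm{diam}(\mT)>D$ corner case, which you handle correctly via refinement.
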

\begin{proof}
With $\varepsilon_n=n^{-\alpha/(2\alpha+p)}\sqrt{\log n}$, the assumption
\eqref{selfsim2} implies
$
\|{f}_{0}^{\mT}-f_0\|_L> M_n/M_3\,\varepsilon_n
$
when $\mathrm{diam}(\mT)>d_n(\alpha)$, where $f_0^{\mT}$ is the $\|\cdot\|_L$ projection of $f_0$ onto $\mF[\mT]$. The posterior distribution under the Bayesian CART prior concentrates at the rate $\varepsilon_n$ in the $\|\cdot\|_L$ sense, i.e. $\Pi(\|f-f_0\|_L>M_n\varepsilon_n\C\Y^{(n)})\rightarrow 0$ in $\P^n_0$-probability for any arbitrarily slowly increasing sequence $M_n$.
This follows  from \cite{rockova_vdp} for the conditionally uniform tree partition prior and from \cite{rockova_saha} for the tree-branching process prior. Both of these papers study Gaussian step heights. In the Appendix (Section \ref{sec:append_laplace}), we extend these results to Laplace step heights. From the near-minimaxity of the posterior, it then follows that    partitions that are {\sl not} regular  are  {\sl not} supported  by the posterior.  The dimensionality part regarding $K$ follows from \cite{rockova_vdp} and \cite{rockova_saha}.
\end{proof}

\subsection{Adaptive BvM for Smooth Functionals when $p=1$}\label{sec:adaptive}
It is known that signal-strength conditions enforced through self-similarity allow for the construction of honest adaptive credible balls \citep{gine_nickl2}.
Our notion of self-similarity is sufficient for obtaining  the adaptive semi-parametric BvM phenomenon for smooth linear functionals.
Denote with 
$$
\mR(K_n)=\{\mT\in \mR_n\cap \mV^K\quad\text{for}\quad K\leq K_n \}.
$$ 
Lemma \ref{lemma1} shows that the posterior concentrates on this set so that we can perform the analysis locally.
For any $\mT\in\mR(K_n)$, we write
$$
\wh\Psi_{\mT}=\Psi(f_0^{\mT})+\frac{W_n(a_{\mT})}{\sqrt{n}},
$$
where $f_0^{\mT}$ and $a_{\mT}$ are the $\|\cdot\|_L$ projections onto $\mF[\mT]$. 
Under the adaptive prior (treating the partitions as random with a prior) the posterior is asymptotically close to a {\sl mixture} of normals indexed by $\mT\in\mR(K_n)$ with weights $\pi(\mT\C\Y^{(n)},\mR(K_n))$. When 
\begin{equation}\label{eq:conditions}
\max_{\mT\in\mR(K_n)}\big|\|a_\mT\|_L-\|a\|_L\big|=o(1)\quad\text{and}\quad \max_{\mT\in\mR(K_n)}\sqrt{n}(\Psi_n-\wh\Psi_\mT)=o_P(1),
\end{equation}
this mixture boils down to the target law 
$\mathcal{N}(0,\|a\|_L^2)$. The first  condition in \eqref{eq:conditions}  holds  owing to the fact that $\|a-a_\mT\|_L\lesssim d_n(\alpha)^\gamma\rightarrow 0$ (Lemma 3.2 of Rockova and van der Pas (2017)).
The second condition will be satisfied as long as 
\begin{equation}\label{diam}
\sqrt{n}\, d_n(\alpha)^{\alpha+\gamma}\rightarrow 0.
\end{equation}
For $\mT\in\mR(K_n)$ and assuming $p=1$, we have for $M_n\lesssim \sqrt{\log n}$
$$
\sqrt{n}\, d_n(\alpha)^{\alpha+\gamma}\lesssim n^{1/2-\frac{\alpha+\gamma}{2\alpha+1}}(\log n)^{\frac{\alpha+\gamma}{\alpha}}
\rightarrow 0
$$
for $\gamma>1/2$. We can now state an adaptive variant of Theorem \ref{thm1} for random partitions.


\begin{theorem}\label{thm3}
Assume model \eqref{model} with $p=1$, where  $f_0\in\Ha_1$ and $a\in\Haa_1$ for $1/2<\alpha\leq 1$ and $\gamma>1/2$. 
Assume that $f_0$ is {\sl self-similar}. Under the Bayesian CART prior  (\eqref{prior:K} and \eqref{prior:partition} or \eqref{eq:split_prob}) with Gaussian or Laplace step heights (\eqref{eq:priorb}  or \eqref{eq:priorb_laplace}), we have
$$
\Pi\left(\sqrt{n}\left(\Psi(f_{\mT,\b})-\wh\Psi_{\mT}\right)\C\Y^{(n)}\right)\rightsquigarrow \mathcal{N}(0,\|a\|_L^2)
$$
in $\P_0^n$-probability as $n\rightarrow\infty$.
\end{theorem}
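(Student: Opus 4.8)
\emph{Proof idea.} The plan is to combine the localization supplied by Lemma~\ref{lemma1} with a partition-wise version of the argument behind Theorem~\ref{thm1}, and then to average over partitions, working throughout with the bounded Lipschitz metric $\beta$ of \eqref{beta:convergence}. Write $\mu_\mT$ for the conditional posterior law of $\sqrt n\,(\Psi(f_{\mT,\b})-\wh\Psi_\mT)$ given $\Y^{(n)}$ and a fixed partition $\mT$, so that the posterior law of $\sqrt n\,(\Psi(f_{\mT,\b})-\wh\Psi_\mT)$ is the mixture $\sum_\mT \pi(\mT\C\Y^{(n)})\,\mu_\mT$. Since the conditional posterior of $\b$ given $\mT$ is Gaussian under \eqref{eq:priorb} and $\Psi(f_{\mT,\b})=\langle a_\mT,f_{\mT,\b}\rangle_L$ is linear in $\b$, each $\mu_\mT$ is itself a one-dimensional Gaussian in the Gaussian-prior case; in the Laplace case \eqref{eq:priorb_laplace} it is handled through the LAN expansion and the change of measure $\beta_k\mapsto\beta_k-t a_k^\mT/\sqrt n$, exactly as in the remarks following Theorem~\ref{thm1}.

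First I would discard the irregular partitions. By Lemma~\ref{lemma1}, $\Pi(\mT\notin\mR(K_n)\C\Y^{(n)})=o_P(1)$; since every test function entering $\beta$ is bounded and $\beta$ is convex in its first argument, this gives
$$
\beta\!\left(\Pi[\sqrt n(\Psi(f_{\mT,\b})-\wh\Psi_\mT)\in\cdot\C\Y^{(n)}],\ \mathcal{N}(0,\|a\|_L^2)\right)\ \le\ o_P(1)\ +\ \sup_{\mT\in\mR(K_n)}\beta\!\left(\mu_\mT,\ \mathcal{N}(0,\|a\|_L^2)\right),
$$
so it suffices to show that the supremum on the right tends to $0$ in $\P_0^n$-probability. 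Next, for a \emph{single} fixed $\mT\in\mR(K_n)$ I would establish the conditional BvM $\mu_\mT\rightsquigarrow\mathcal{N}(0,\|a_\mT\|_L^2)$ by rerunning the proof of Theorem~\ref{thm1} with the equivalent-blocks partition replaced by $\mT$: the conditional posterior of $\b$ being Gaussian (or Laplace), one checks that $\mu_\mT$ has mean $\wh\Psi_\mT+o_P(1/\sqrt n)$ and variance $\|a_\mT\|_L^2+o(1)$, which requires (i) near-invariance of the prior under the shift $\beta_k\mapsto\beta_k-t a_k^\mT/\sqrt n$, (ii) conditional posterior concentration near the projection $f_0^\mT$ (Gaussian case only), and (iii) the no-bias bound $\sqrt n\,\langle a-a_\mT,f_0-f_0^\mT\rangle_L=o(1)$.

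The crucial point, and the step I expect to be the main obstacle, is that (i)--(iii) must hold \emph{uniformly} over $\mT\in\mR(K_n)$: this set contains exponentially many partitions, so a pointwise-in-$\mT$ version is useless, and I would need every estimate in the proof of Theorem~\ref{thm1} to depend on $\mT$ only through $\mathrm{diam}(\mT)\le d_n(\alpha)$ and $K\le K_n$. For (iii) this is immediate from Cauchy--Schwarz and Lemma~3.2 of \cite{rockova_vdp}:
$$
\sqrt n\,\langle a-a_\mT,f_0-f_0^\mT\rangle_L\ \le\ \sqrt n\,\|a-a_\mT\|_L\,\|f_0-f_0^\mT\|_L\ \lesssim\ \sqrt n\,\mathrm{diam}(\mT)^{\alpha+\gamma}\ \le\ \sqrt n\,d_n(\alpha)^{\alpha+\gamma}\ \to\ 0,
$$
by \eqref{diam}, which is precisely where the restrictions $1/2<\alpha\le1$ and $\gamma>1/2$ are used; (i) is a tail property of the Gaussian (or Laplace) step-height prior that does not see the geometry of $\mT$ at all; and (ii) reduces to a uniform concentration bound for the conditional Gaussian linear model with a rate governed only by $K\le K_n$, the Laplace case dispensing with it altogether. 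This yields $\sup_{\mT\in\mR(K_n)}\beta(\mu_\mT,\mathcal{N}(0,\|a_\mT\|_L^2))\to0$ in $\P_0^n$-probability.

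Finally I would remove the $\mT$-dependence of the limiting variance: by the first part of \eqref{eq:conditions}, $\sup_{\mT\in\mR(K_n)}\big|\|a_\mT\|_L-\|a\|_L\big|=o(1)$ (owing to $\|a-a_\mT\|_L\lesssim d_n(\alpha)^\gamma$), hence $\sup_{\mT\in\mR(K_n)}\beta(\mathcal{N}(0,\|a_\mT\|_L^2),\mathcal{N}(0,\|a\|_L^2))\to0$, and the triangle inequality closes the bound. Equivalently, one may phrase the whole argument through Laplace transforms as in \eqref{laplace}: each conditional transform $\E^\Pi[e^{t\sqrt n(\Psi(f_{\mT,\b})-\wh\Psi_\mT)}\C\Y^{(n)},\mT]$ converges to $e^{t^2\|a\|_L^2/2}$ uniformly over $\mT\in\mR(K_n)$, the mixture over $\mT$ (whose weights sum to $1-o_P(1)$) inherits this limit for every $t\in\R$, and the Laplace-transform-to-weak-convergence criterion of \cite{castillo_rous} delivers the stated BvM. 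Everything else --- the mixture decomposition, convexity of $\beta$, and the bookkeeping of the finitely many $o_P(1)$ terms --- is routine.
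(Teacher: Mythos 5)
Your proposal follows essentially the same route as the paper: localization to the regular partitions $\mR(K_n)$ via Lemma \ref{lemma1} under self-similarity, a mixture decomposition over $\mT\in\mR(K_n)$, a uniform rerun of the Theorem \ref{thm1} argument with all bounds controlled only through $K\le K_n$ and $\mathrm{diam}(\mT)\le d_n(\alpha)$, and the variance identification $\|a_\mT\|_L\to\|a\|_L$; the paper phrases the whole thing through the conditional Laplace transforms $I_{n,\mT}$, which you note as the equivalent formulation. The approach and the key estimates match, so this is correct and not a genuinely different proof.
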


\begin{proof}
Section \ref{sec:proof_thm3}
\end{proof}

Theorem \ref{thm3} can be regarded as an adaptive extension of the regular density histogram result of \cite{castillo_rous}. Here, we instead focus on irregular and adaptive regression histograms underpinned by tree priors and treat both  $K$  and  $\mT$ as random under self-similarity. The change of measure argument is performed locally for each regular partition.

Theorem \ref{thm3} can be extended to tree ensembles. The self-similarity assumption would be instead formulated in terms of a {\sl global partition}, which is obtained by super-imposing all tree partitions inside $\mE$ and by merging empty cells with one of their neighbors.   Since tree ensembles also concentrate at near-minimax speed \citep{rockova_vdp, rockova_saha}, one obtains that the posterior concentrates on regular ensembles (where the diameter is small). The analysis is then performed locally on regular ensembles in the same spirit as for single trees.

\subsection{BvM for Average Regression Surface when $p>1$}

One of the main limitations of tree/forest methods is that they cannot estimate optimally functions that are smoother than Lipschitz \citep{scricciolo}. 
 The reason for this limitation is that step functions are relatively rough; e.g. the approximation error of histograms for functions that are smoother than Lipschitz is at least of the order $1/K$, where $K$ is the number of bins \citep{rockova_vdp}. The number of steps required to approximate a smooth function well is thus too large, creating a costly bias-variance tradeoff.
When  $p>1$, the no-bias condition \eqref{eq:nobias} would be satisfied if  $\gamma>p/2$ which, from the H\"{o}lder continuity,  holds when $a(\x_i)$ is a constant function. 

Focusing on the actual BART method when $p>1$,  we now rephrase Theorem \ref{thm3} for the average regression functional \eqref{psi_functional} obtained with $a(\cdot)=1$.
When $a(\cdot)$ is a constant function, the no-bias condition \eqref{eq:nobias} is automatically satisfied. Recall that the second requirement for BvM pertains to  the shift of measures.  It turns out that the Gaussian prior \  \eqref{eq:priorb} may induce too much bias when the variance is too small (fixed as $n\rightarrow \infty$). We thereby deploy an additional assumption in the prior to make sure that the variance increases suitably with the number of steps $K$.
For the BART prior on step heights $\b^t$ of each tree $\mT^t\in\mE$, we assume either a Gaussian prior
$\b^t\sim\mathcal{N}_{K^t}(\bm 0,K^t\times I_{K^t})$ or the Laplace prior with $\lambda_t\asymp 1/\sqrt{K^t}$.  
 
Having the variance scale with the number of steps is generic in the multi-scale analysis of  Haar wavelets. For instance, assuming a full dyadic tree with $L-1$ layers, where $L=\log_2 K$, the iid standard Gaussian product prior on the wavelet coefficients implies a Gaussian prior with variance $K$ on the bottom nodes \citep{castillo_rockova}. The bottom nodes correspond to histogram bins in our setup, where more refined partitions will ultimately have more variable step sizes.

The following theorem is formulated for a few variants of the BART prior. This prior consists of (a) either fixed number of trees $T$ (as recommended by \citep{bart}), (b) the Galton-Watson prior \eqref{eq:split_prob} or the conditionally uniform tree prior \eqref{prior:K} and \eqref{prior:partition}, independently for each tree, and (c) the Gaussian prior $\b^t\sim\mathcal{N}_{K^t}(\bm 0,K^t\times I_{K^t})$ or the Laplace prior with $\lambda_t\asymp 1/\sqrt{K^t},$ where $K^t$ is the number of bottom nodes of a tree $\mT^t$. Below, we denote with 
$\wh\Psi_{\mE}=\Psi(f^0_\mE)+W_n(a)/\sqrt{n}$, where $f^0_\mE$ is a projection of $f_0$ onto $\mF[\mE],$ a set of all forest mappings \eqref{forest} supported on the tree ensemble $\mE$.
 
\begin{theorem}\label{thm4}
Assume model \eqref{model} with $p\geq 1$, where $f_0\in\Ha_p$ is endowed with the BART prior (as stated above) and where $\log p\lesssim n$  and $1/2\leq \alpha<1$. Assume that 
 $a(\cdot)=1$ in \eqref{psi_functional}. When the design $\mX=\{\x_i\}_{i=1}^n$ is regular, we have
$
\Pi\left(\sqrt{n}\left(\Psi(f_{\mE,\mB})-\wh\Psi_{\mE}\right)\C\Y^{(n)}\right)\rightsquigarrow \mathcal{N}(0,\|a\|_L^2)
$
in $\P_0^n$-probability as $n\rightarrow\infty$.
\end{theorem}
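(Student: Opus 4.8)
The plan is to run the Laplace-transform argument of \cite{castillo_rous} behind Theorems~\ref{thm1} and \ref{thm3}, specialized to $a\equiv 1$, in which case the semiparametric bias vanishes identically. The key structural observation is that the constant function lies in $\mF[\mE]$ for every forest ensemble $\mE$ (the cell indicators of any one of its trees sum to $\mathbb I$), so $f_0-f^0_\mE\perp_L\mathbb I$, whence $\Psi(f^0_\mE)=\Psi(f_0)$ and $\wh\Psi_\mE=\Psi(f_0)+W_n(a)/\sqrt n=\bar Y$, the efficient estimator. Thus the analogues of both requirements in \eqref{eq:conditions} hold trivially ($a_\mE\equiv 1$ and $\Psi_n-\wh\Psi_\mE\equiv 0$), the no-bias condition \eqref{eq:nobias} is vacuous, and self-similarity is not needed. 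It remains to prove, for each fixed $u\in\R$, that $\E^\Pi[\e^{u\sqrt n(\Psi(f_{\mE,\mB})-\wh\Psi_\mE)}\C\Y^{(n)}]\to\e^{u^2\|a\|_L^2/2}$ in $\P_0^n$-probability, where $\|a\|_L^2=1$.

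First I would carry out a change of measure, conditionally on $\mE$. The constant shift $f_{\mE,\mB}\mapsto f_{\mE,\mB}-u/\sqrt n$ is realized by shifting the step heights of a single tree, $\b^1\mapsto\b^1-u\mathbf 1/\sqrt n$. A direct LAN computation gives the exact identity $u\sqrt n(\Psi(f)-\wh\Psi_\mE)+\Delta_n(f)=\Delta_n(f-u/\sqrt n)+\tfrac{u^2}{2}\|a\|_L^2$, so the conditional posterior Laplace transform equals $\e^{u^2\|a\|_L^2/2}\,\E^\Pi[\pi_1(\b^1+u\mathbf 1/\sqrt n)/\pi_1(\b^1)\C\Y^{(n)},\mE]$, and it suffices to show the prior-ratio factor tends to $1$ uniformly over ensembles $\mE$ in the high-posterior-probability set isolated by the concentration results. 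For the Laplace prior with $\lambda_t\asymp 1/\sqrt{K^t}$ the triangle inequality bounds the log of this ratio by $\lambda_1 K^1|u|/\sqrt n\asymp|u|\sqrt{K^1/n}=o(1)$, with no dependence on $\b^1$, since $K^1\le K_n=o(n)$; this is exactly why the Laplace prior needs no separate concentration input (cf.\ the remark after Theorem~\ref{thm1}). For the Gaussian prior $\b^1\sim\mathcal N_{K^1}(\mathbf 0,K^1 I)$ the log-ratio equals $-\tfrac{u}{K^1\sqrt n}\sum_k\beta^1_k-\tfrac{u^2}{2n}$; since the conditional posterior of $\sum_k\beta^1_k$ given $\mE$ is Gaussian with variance at most $(K^1)^2$, taking expectations reduces the claim to $\tfrac{1}{K^1\sqrt n}\big|\E^\Pi[\sum_k\beta^1_k\C\Y^{(n)},\mE]\big|=o_P(1)$.

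The last bound is the main obstacle: the individual tree step heights are not identified by the likelihood, so this quantity cannot be controlled by the $\|\cdot\|_L$-concentration of $f_{\mE,\mB}$ alone, and here the variance-$K^t$ scaling becomes essential since the log-posterior carries the ridge penalty $\tfrac12\sum_t\|\b^t\|_2^2/K^t$. Two Cauchy--Schwarz steps give $\big|\E^\Pi[\sum_k\beta^1_k\C\cdot]\big|\le\sqrt{K^1}\,\sqrt{\E^\Pi[\|\b^1\|_2^2\C\cdot]}\le K^1\sqrt{\E^\Pi[\sum_t\|\b^t\|_2^2/K^t\C\Y^{(n)},\mE]}$, so it suffices to bound the posterior expectation of the ridge penalty by $o(n)$. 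To get this I would use that, for $\mE$ in the posterior support, $\|f^0_\mE-f_0\|_L\le\|f_{\mE,\mB}-f_0\|_L\lesssim\varepsilon_n=n^{-\alpha/(2\alpha+p)}\sqrt{\log n}$ by the projection inequality and the near-minimax BART concentration of \cite{rockova_vdp, rockova_saha} under the regular design (extended to Laplace heights in Section~\ref{sec:append_laplace}); feeding a bounded-ridge-norm forest representation of $f^0_\mE$ into the penalized least-squares objective then forces $\E^\Pi[\sum_t\|\b^t\|_2^2/K^t\C\Y^{(n)},\mE]\lesssim n\varepsilon_n$, hence $\tfrac{1}{K^1\sqrt n}|\E^\Pi[\sum_k\beta^1_k\C\cdot]|\lesssim\sqrt{\varepsilon_n}=o(1)$. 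One then checks uniformity in $\mE$, averages over the posterior law of $\mE$ on the good set, and concludes by the convergence-of-Laplace-transforms criterion exactly as in the proof of Theorem~\ref{thm3} (Section~\ref{sec:proof_thm3}). The delicate point I expect to require the most work is producing, for every ensemble in the posterior support, a forest representation of $f^0_\mE$ with ridge norm bounded uniformly in $\mE$ --- essentially a control on the conditioning of the over-parametrized forest design matrix.
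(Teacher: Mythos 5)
Your proposal follows essentially the same route as the paper: the no-bias condition is automatic for $a\equiv 1$, the BvM reduces to a change of measure under a constant shift of the forest, the Laplace prior ratio is controlled by $\lambda_t K^t/\sqrt{n}\asymp\sqrt{K^t/n}=o(1)$ on the set where the posterior concentrates, and the Gaussian case requires bounding the linear term in the prior ratio via posterior control of the step-height magnitudes (the paper distributes the shift as $s/(T\sqrt{n})$ over all $T$ trees rather than placing it in a single tree, but since $T$ is fixed this is immaterial). The only substantive difference is that you explicitly flag the over-parametrization of the forest coefficients as the delicate step, whereas the paper simply asserts that $\mB$ concentrates around ``the'' projection of $f_0$ onto $\mF[\mE]$ at rate $\sum_t K^t\log n$ and bounds $\|\mB\|_2/\sqrt{n}$ accordingly --- so your sketch is, if anything, more candid about the gap both arguments must close.
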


\begin{proof}
Section \ref{sec:thm4_proof}
\end{proof}

\section{Discussion}
This note reveals some aspects of  Bernstein-von Mises limits under adaptive BART priors. We focus on semi-parametric BvM's for linear functionals of the infinite-dimensional regression function parameters. This semi-parametric setup already poses nontrivial challenges on hierarchical Bayes. We have reiterated and highlighted some of the challenges here and addressed them with self-similarity identification.  Our results serve as a step towards obtaining fully non-parametric BvM for the actual function $f_0$, as opposed to just its low-dimensional summaries. These results will be reported in follow-up work.

\section{Appendix}

\subsection{Proof of Theorem \ref{thm1}}\label{sec:proof:thm1}

First, we show that with $K$ large enough $\wh\Psi_K$  will satisfy
\begin{equation}\label{psis}
\sqrt{n}|\Psi_n-\wh\Psi_K|=o_P(1).
\end{equation}
We can write
\begin{align*}
&\sqrt{n}|\Psi_n-\wh\Psi_K|=\sqrt{n}|\Psi(f_0)-\Psi(f_0^K)+W_n(a-a^K)|\\
&\quad\quad=\sqrt{n}\left|\frac{1}{n}\sum_{i=1}^n \left[f_0(\x_i)-f^K_0(\x_i)+\varepsilon_i\right]\left[a(\x_i)-a^K(\x_i)\right] \right|,
\end{align*}
where we used the fact that
$$
\sum_{i=1}^n[f_0(\x_i)-f^K_0(\x_i)]a^{K}(\x_i)=\sum_{k=1}^K\frac{a_k^K}{n}\sum_{\x_i\in\Omega_k}\left[f_0(\x_i)-\frac{n}{\mu(\Omega_k)}\sum_{\x_i\in\Omega_k}f_0(\x_i)\right]=0.
$$
Next, we can write
\begin{align*}
&\sqrt{n}|\Psi_n-\wh\Psi_K|<\left[\sqrt{n}\|a-a^K\|_L\times\|f_0-f_0^K\|_L+Z_n^K\right],
\end{align*}
where 
$$
Z_n^K=\frac{1}{\sqrt{n}}\sum_{i=1}^n\varepsilon_i[a(\x_i)-a^K(\x_i)].
$$
We assume that the design is regular (according to Definition 3.3 of Rockova and van der Pas (2017) (further referred to as RP17) with $p=q=1$).
Assuming that $a\in\Haa$,
it follows from the proof of Lemma 3.2 of Rockova and van der Pas (2017) that
$$
\|f_0-f_0^K\|_L\leq \|f_0\|_{\Ha}C_1/K^{\alpha}\quad\text{and}\quad \big|\|a\|_L-\|a^K\|_L\big|\leq \|a-a^K\|_L\leq \|a\|_{\Haa}C_2/K^{\gamma}.
$$
We assume that $\|a\|_L^2\leq \|a\|_\infty^2<C_a^2$ for some $C_a>0$ and $\|f_0\|_\infty<C_f$ for some $C_f>0$. 
Because
$$
\Var Z_{n}^K=\|a-a^K\|_L^2=\frac{1}{n}\sum_{i=1}^n[a(\x_i)-a^K(\x_i)]^2\lesssim 1/K^{2\gamma},
$$
we conclude that $Z_n^K=o_P(1)$ when $K\rightarrow\infty$ as $n\rightarrow\infty$ and thereby
\begin{align}
&\sqrt{n}|\Psi_n-\wh\Psi_K|\lesssim\sqrt{n}K^{-(\alpha+\gamma)} +o_P(1).\label{eq:nobias1}
\end{align}
With the choice $K=K_n=\lfloor(n/\log n)^{1/(2\alpha+1)}\rfloor$  for $\alpha>1/2$ and $\gamma>1/2$, \eqref{eq:nobias1} will be satisfied.

To continue, we introduce the following notation
\begin{align*}
f_t^K=f^K-\frac{t\,a^K}{\sqrt n}
\end{align*}
and write
\begin{align*}
&\ell_n(f_t^K)-\ell_n(f_0^K)-[\ell_n(f^K)-\ell_n(f_0^K)]\\
&\quad\quad=-\frac{n}{2}[\|f_t^K-f_0^K\|_L^2-\|f^K-f_0^K\|_L^2]+\sqrt{n}\,W_n(f_t^K-f^K)\\
&\quad\quad=-\frac{n}{2}[\|f_t^K-f^K\|_L^2+2\langle f_t^K-f^K,f^K-f_0^K\rangle_L]+\sqrt{n}\,W_n(f_t^K-f^K)\\
&\quad\quad= -\frac{t^2}{2}\|a^K\|_L^2+\sqrt{n}\,t\langle a^K,f^K-f_0^K\rangle_L-t\, W_n(a^K)
\end{align*}
Then we have
$$
t\sqrt{n}(\Psi(f^K)-\wh\Psi_K)=t\sqrt{n}(\Psi(f^K)-\Psi(f_0^K))-t\,W_n(a^K)
$$
and thereby, using the fact 
$$
t\sqrt{n}(\Psi(f^K)-\Psi(f_0^K))=t\sqrt{n}\langle a^K,f^K-f_0^K\rangle_L,
$$
we can write
\begin{align*}
&t\,\sqrt{n}(\Psi(f^K)-\wh\Psi_K)+\ell_n(f^K)-\ell_n(f_0^K)\\
&\quad\quad=\ell_n(f_t^K)-\ell_n(f_0^K)+\frac{t^2}{2}\|a^K\|_L^2-\sqrt{n}\,t\langle a^K,f^K-f_0^K\rangle_L+t\sqrt{n}\langle a^K,f^K-f_0^K\rangle_L\\
&\quad\quad=\ell_n(f_t^K)-\ell_n(f_0^K)+\frac{t^2}{2}\|a^K\|_L^2.
\end{align*}
Given sets $A_{n,K}\subset \mF[K]$  (to be defined later) such that $\Pi(A_{n,K}^C\C\Y^{(n)})\rightarrow 0$ in $\P_0^n$ probability,
we define
\begin{equation}\label{Ink}
I_{n,K}=\E^{\Pi}[\e^{t\sqrt{n}(\Psi(f^K)-\wh{\Psi}_K)}\C\Y^{(n)}, A_{n,K}]
\end{equation}
and, using the calculations above, we write
\begin{equation}\label{ink}
I_{n,K}=\e^{\frac{t^2}{2}\|a^K\|_L^2}\times\frac{\int_{A_{n,K}}\e^{\ell_n(f_t^K)-\ell_n(f_0^K)}\d\Pi^K(f^K)}{\int_{A_{n,K}}\e^{\ell_n(f^K)-\ell_n(f_0^K)}\d\Pi^K(f^K)}.
\end{equation}
For $\b=(\beta_1,\dots,\beta_K)'\in\R^K$, define $\beta_k^t=\beta_k-\frac{t a_k^K}{\sqrt n}$ and denote $\b^t=(\beta_1^t,\dots,\beta_K^t)\in\R^K$ and $\a^K=(a_1^K,\dots, a_K^K)'\in\R^K$. Then we have
\begin{align*}
f^K(\x)=\sum_{k=1}^K\1(\x\in\Omega_k)\beta_k\quad\text{and}\quad f^K_t(\x)&=\sum_{k=1}^K\1(\x\in\Omega_k)\beta_k^t.
\end{align*}
Assuming the multivariate Gaussian prior $\pi(\b)=\frac{1}{\sqrt{2\pi|\Sigma|}}\e^{-\frac{1}{2}\b'\Sigma^{-1}\b}$ 
centered at zero with a covariance matrix $\Sigma$, we can write
\begin{align}\label{change}
\pi(\b)&=\pi(\b^t)\e^{\frac{t^2}{2n}\a^{K'}\Sigma^{-1}\a^K- \frac{t}{\sqrt n}\, \a^{K'}\Sigma^{-1}\b}.
\end{align} 
Next, for $\wt\varepsilon_{n,K}=\sqrt{\frac{K\,\log (n)}{n}}$ and $M>0$, we define
\begin{equation}\label{def:Ank}
A_{n,K}(M)=\left\{f^K\in\mF[K]: \|f^K-f_0^K\|_L\leq M\,\wt\varepsilon_{n,K}\right\}.
\end{equation}
We show (Section \ref{sec:setA}) that $\Pi(A_{n,K}^C(M)\C\Y^{(n)})\rightarrow 0$ in $\P_0^n$-probability as $n\rightarrow\infty$ for some suitably large $M>0$.
We note that
$$
A_{n,K}(M)\subset\{\b\in\R^K:\|\b-\b_0^K\|^2_2\leq M^2\,n\,\wt\varepsilon_{n,K}^2 \},
$$
where $\b_0^K\in\R^K$ are coefficients of the projection of $f_0$ onto $\mF[K]$. On the set $A_{n,K}(M)$, we can thus write
\begin{align}
\frac{t}{\sqrt n}\, \big|\a^{K'}\Sigma^{-1}\b\big|&<\frac{t}{\sqrt n\,\lambda_{min}}\|a^K\|_2\sqrt{\|\b-\b^K_0\|_2^2+\|\b^K_0\|_2^2}\notag\\
&<\frac{t}{\sqrt n\,\lambda_{min}}\|a^K\|_2\sqrt{K\log n+KC_f^2}\notag\\
&<\frac{t\,K\,C_a}{\sqrt n\,\lambda_{min}}\sqrt{\log n+C_f^2},\label{schwartz}
\end{align}
where $\lambda_{min}$ is the smallest singular value of $\Sigma$ and 
where we used the fact that  $\|a\|_\infty<C_a$ and $\|f_0\|_\infty<C_f$. Using \eqref{schwartz} and \eqref{change}, we have
\begin{align*}
\e^{\frac{t^2}{2}\|a\|_L^2 
-\frac{t\,K\,C_a}{\lambda_{min}\,\sqrt n}\sqrt{\log n+C_f^2}}<I_{n,K}&<\e^{\frac{t^2}{2}\|a\|_L^2+\frac{t^2K\,C_a^2}{2\,n\,\lambda_{min}}+\frac{t\,K\,C_a}{\lambda_{min}\,\sqrt n}\sqrt{\log n+C_f^2}}.
\end{align*}
If $\lambda_{min}>c$ for some $c$ and $K\sqrt{(\log n)/n} \rightarrow 0$, we have
$\lim_{n\rightarrow\infty}I_{n,K}=\e^{t^2\|a\|^2_L}$ for each $t\in\R$. This is satisfied if we set, for instance, $K=K_n=\lfloor \left(n/\log n\right)^{1/(2\alpha+1)}\rfloor$  with $\alpha>1/2$.
This concludes the proof of the BvM property for a  fixed $K$ and a single partition.

\subsubsection{The Laplace Prior}\label{sec:laplace_proof}
 For the Laplace prior, we use the reverse triangle inequality $|\beta_k|>|\beta_k^t|-|\frac{t a_k^K}{\sqrt n}|$ to find that
 \begin{align*}
 \pi(\b^K)&=\prod_{k=1}^K\psi(\beta_k;\lambda)<\prod_{k=1}^K\psi(\beta_k^t;\lambda)\e^{\frac{t\lambda |a_k^K|}{\sqrt n}}=\pi(\b^K_t)\e^{ \frac{t\lambda}{\sqrt n}\,\|\a^K\|_1}<\pi(\b^K_t)\e^{ \frac{t\lambda}{\sqrt n}\, KC_a}.
\end{align*}
Then we find that for $K=K_n$
$$
\e^{\frac{t^2}{2}(\|a\|_L^2+o(1))}\times  \e^{-\frac{t\lambda}{\sqrt n}\,K_n(\alpha)C_a}<I_{n,K}<\e^{\frac{t^2}{2}(\|a\|_L^2+o(1))}\times  \e^{\frac{t\lambda}{\sqrt n}\,K_n(\alpha)C_a}.
$$
Since $K_n(\alpha)=\lfloor (n/\log n)^{1/(2\alpha+1)}\rfloor$, we have for $\alpha>1/2$ and $t\in\R$. 
\begin{align*}
\lim_{n\rightarrow\infty}\E^{\Pi}[\e^{t\sqrt{n}(\Psi(f^K)-\Psi_n)}\C\Y^{(n)}]=\e^{\frac{t^2}{2}\|a\|^2_L}. \quad\qedhere
\end{align*}
It is interesting to note that with the Laplace prior, one can obviate the proof of  posterior concentration around the projections, which was needed for the Gaussian case.

\subsubsection{The Set $A_{n,K}$}\label{sec:setA}
We want to show that the posterior distribution concentrates around $f_0^K$, the projection of $f_0$ onto $\mF[K]$, at the following contraction rate
$$
\wt\varepsilon_{n,K}= \sqrt{\frac{K\log(n)}{n}}.
$$ 
For $K\leq n/\log(n)$ and $A_{n,K}(M)$ defined in \eqref{def:Ank},
we show that
\begin{equation}\label{post_conc1}
\lim_{n\rightarrow\infty}\Pi(A_{n,K}^C(M)\C\Y^{(n)})=0\quad\text{in $\P_0^n$-probability}
\end{equation}
for some sufficiently constant  $M>0$. We show this statement by verifying conditions (2.4) and (2.5) of Theorem 2.1 of Kelijn and van der Vaart (2006).
We start with the entropy condition (2.5). In our model, the covering number for testing under misspecification can be bounded by the classical local entropy (according to Lemma 2.1 by Kelijn and van der Vaart (2006). It follows from Section 8.1 of Rockova and van der Pas (2017) that the local entropy satisfies
$$
N\Big(\tfrac{\varepsilon}{36}, \Big\{f^K \in \mF[K]: \|f^K - f_0^K\|_L < \varepsilon\Big\}, \|.\|_L\Big) \leq \left(\frac{108}{\bar{C}} \sqrt{n}\right)^K,
$$
where $\bar C$ is such that $\mu(\Omega_k)>\bar C/n$.
The entropy condition (2.5)  will be met since
$$
K\log n\lesssim n\,\wt\varepsilon_{n,K}^2.
$$
Regarding the prior concentration condition (2.4), we note  (similarly as in Section 8.2 of Rockova and van der Pas (2017)) that
$$
\left\{f^K\in\mF[K]:\|f^K-f_0^K\|_L\leq M\, \wt\varepsilon_{n,K}\right\}\supset\{\b\in\R^K:\|\b-\b_0^K\|_2\leq M\, \wt\varepsilon_{n,K}\}
$$
With the  Gaussian prior $\b\C K\sim\mathcal{N}_K(0,\Sigma)$, we have
\begin{align*}
\Pi\left(\b\in\R^{K}:\|\b-\b_0^K\|_2\leq M\, \wt\varepsilon_{n,K}\right)&
\geq \Pi\left(\wt \b\in\R^{K}:\|\wt\b-\wt\b_0^K\|_2\leq M\, \frac{\wt\varepsilon_{n,K}}{\sqrt{\lambda_{max}}}\right),
\end{align*}
where $\lambda_{max}$ is the maximal eigenvalue of $\Sigma$ and where $\wt\b_0^K=\Sigma^{-1/2}\b_0^K$ and $\wt\b=\Sigma^{-1/2}\b\sim\mathcal{N}_K(0,\mathrm{I}_K)$. The right-hand side can be further lower-bounded with
\begin{align*}
 \frac{2^{-K}\e^{-\|\wt\b_0^K\|_2^2-M^2 \wt\varepsilon_{n,K}^2/(4\lambda_{max})}}{\Gamma\left(\frac{K}{2}\right)\left(\frac{K}{2}\right)}\left(\frac{M \wt\varepsilon_{n,K}}{\sqrt{\lambda_{max}}}\right)^{K}.
\end{align*}
With $\lambda_{min}$ denoting the minimal eigenvalue of $\Sigma$, we obtain the following lower bound for the above:
$$
\e^{-C_1\, K\log \left(C_2\,K\,\lambda_{max}/\wt \varepsilon_{n,K}\right)-K\|f_0\|_\infty^2/\lambda_{min}-C_3 \wt\varepsilon_{n,K}^2/\lambda_{max}}
$$
With $\lambda_{min}>c$ for some $c>0$,  $\lambda_{max}\lesssim n$ and $\|f_0\|_{\infty}\leq \log^{1/2}(n)$, the above is bounded from below by $\e^{-D\, K\log(n)}$ for some suitable $D>0$.
It then follows from Theorem 2.1 of \cite{kleijn_vdv} that \eqref{post_conc1} holds.

\subsection{Posterior Concentration for the Laplace Prior}\label{sec:append_laplace}
\cite{rockova_vdp} and \cite{rockova_saha} show posterior concentration for BART under (a) the conditionally uniform prior \eqref{prior:K} and \eqref{prior:partition} and (b) the Galton Watson Process prior \eqref{eq:split_prob}. Both of these results apply for Gaussian step heights. Here, we formally show that the Laplace prior gives rise to optimal posterior concentration as well.

\begin{theorem}\label{thm:trees}
Assume $f_0\in \mathcal{H}^{\alpha}_{p}$  with {$0<\alpha\leq 1$} where
{$p \lesssim \log^{1/2} n$ and $\|f_0\|_\infty\lesssim\log^{1/2} n$}. Moreover, we assume that $\mX\equiv\{\x_i\}_{i=1}^n$ is regular. We assume priors \eqref{prior:K} and \eqref{prior:partition} or the Galton Watson process \eqref{eq:split_prob} and Laplace step heights \eqref{eq:priorb_laplace} where $1/\lambda\lesssim \sqrt{K}$.  Then with $\varepsilon_n=n^{-\alpha/(2\alpha+p)}\log^{1/2} n$ we have 
\begin{equation*}
\Pi\left( f_{\mT,\b} : \|f_{\mT,\b} - f_0\|_n > M_n\,\varepsilon_n \mid \Y^{(n)}\right) \to 0,
\end{equation*}
 for any $M_n \to \infty$  in $\P_{0}^n$-probability, as $n,p \to \infty$. 
\end{theorem}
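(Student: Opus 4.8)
The plan is to run the standard testing-based posterior contraction machinery of Ghosal, Ghosh and van der Vaart, re-deriving only the ingredients that touch the leaf heights $\b$: the partition prior ($\pi(K)$ with \eqref{prior:partition}, or the Galton--Watson process \eqref{eq:split_prob}) and every tree-combinatorial estimate are taken over verbatim from \cite{rockova_vdp,rockova_saha}. Fix $\varepsilon_n=n^{-\alpha/(2\alpha+p)}\log^{1/2}n$ and $K_n\asymp n\varepsilon_n^2/\log n$. I would verify (i) a prior-mass/Kullback--Leibler condition $\Pi(f_{\mT,\b}:\|f_{\mT,\b}-f_0\|_n\le\varepsilon_n)\ge\e^{-Cn\varepsilon_n^2}$ (in Gaussian regression with unit variance a $\|\cdot\|_n$-ball of this radius sits inside the relevant KL neighbourhood up to constants); (ii) a sieve $\F_n$ with $\Pi(\F_n^c)\le\e^{-(C+4)n\varepsilon_n^2}$ and $\log N(\varepsilon_n,\F_n,\|\cdot\|_n)\lesssim n\varepsilon_n^2$; and (iii) exponentially powerful tests of $f_0$ against $\{f\in\F_n:\|f-f_0\|_n>M_n\varepsilon_n\}$. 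Step (iii) is the classical Gaussian-regression testing lemma and is identical to the Gaussian-heights proofs, so nothing new happens there.

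For (i) I would fix an approximating partition $\mT^\star$ — the $k$-$d$ tree of Definition \ref{def:regular}, or a comparably balanced partition, with $K^\star\asymp K_n$ cells — so that Lemma 3.2 of \cite{rockova_vdp} yields $\|f_0-f_0^{\mT^\star}\|_n\lesssim\mathrm{diam}(\mT^\star)^\alpha\lesssim\varepsilon_n$, where $f_0^{\mT^\star}$ is the $\|\cdot\|_n$-projection of $f_0$ with coefficient vector $\b_0^\star$, $\|\b_0^\star\|_\infty\le\|f_0\|_\infty\lesssim\log^{1/2}n$. Under either partition prior, $\mT^\star$ (and the required value of $K$) carries prior weight at least $\e^{-cK^\star\log n}$, which is precisely the estimate in \cite{rockova_vdp,rockova_saha}. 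Since $\|f_{\mT^\star,\b}-f_{\mT^\star,\b_0^\star}\|_n\le\|\b-\b_0^\star\|_\infty$, it remains to lower-bound the Laplace mass of the box $\{\|\b-\b_0^\star\|_\infty\le\varepsilon_n\}$: coordinatewise, $\int_{|\beta-\beta_{0,k}^\star|\le\varepsilon_n}\tfrac{\lambda}{2}\e^{-\lambda|\beta|}\,\d\beta\ge\lambda\varepsilon_n\,\e^{-\lambda(|\beta_{0,k}^\star|+\varepsilon_n)}$, so over the $K^\star$ cells the mass is at least $\exp\{K^\star\log(\lambda\varepsilon_n)-\lambda\|\b_0^\star\|_1-\lambda K^\star\varepsilon_n\}$. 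Invoking the calibration $1/\lambda\lesssim\sqrt{K^\star}$ — which keeps $\log(\lambda\varepsilon_n)\gtrsim-\log n$ and $\lambda\|\b_0^\star\|_1\le\lambda K^\star\|f_0\|_\infty\lesssim\sqrt{K^\star}\log^{1/2}n\lesssim K^\star\log n$ — the bracket is $\gtrsim-K^\star\log n$, so the conditional mass is $\ge\e^{-c'n\varepsilon_n^2}$. Multiplying by the partition weight and the $\pi(K^\star)$ factor gives (i). This is the $\ell_1$ counterpart of the Gaussian computation in Section \ref{sec:setA}, and it is the only genuinely new step — and the one I expect to be delicate: one must confirm that the Laplace prior, spikier at the origin and heavier-tailed than the Gaussian, still places $\e^{-O(n\varepsilon_n^2)}$ mass near $\b_0^\star$, and it is exactly the pairing of $1/\lambda\lesssim\sqrt{K}$ with $\|f_0\|_\infty\lesssim\log^{1/2}n$ that renders the $\lambda\|\b_0^\star\|_1$ term harmless.

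For (ii) I would take $\F_n=\{f_{\mT,\b}:\mT\in\mV^K,\ K\le K_n,\ \|\b\|_\infty\le B_n\}$ with $B_n$ a suitable polynomial in $n$. The event $\{K>K_n\}$ and the number of tree topologies with at most $K_n$ leaves are controlled exactly as in \cite{rockova_vdp,rockova_saha}; for the heights, the Laplace tail gives $\Pi(\|\b\|_\infty>B_n\mid K)\le K\e^{-\lambda B_n}$, which is $\le\e^{-(C+5)n\varepsilon_n^2}$ once $\lambda B_n\gtrsim K_n\log n$, i.e.\ $B_n\gtrsim K_n^{3/2}\log n$ given $1/\lambda\lesssim\sqrt{K_n}$. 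On $\F_n$, covering the $K$-dimensional coefficient box of radius $B_n$ to $\|\cdot\|_n$-accuracy $\varepsilon_n$ costs $(CB_n/\varepsilon_n)^K$, so together with the local-entropy bound from Section 8.1 of \cite{rockova_vdp} one gets $\log N(\varepsilon_n,\F_n,\|\cdot\|_n)\lesssim K_n\log n\lesssim n\varepsilon_n^2$. Plugging (i)--(iii) into the Ghosal--Ghosh--van der Vaart theorem yields the stated contraction; the tree-ensemble version needed elsewhere follows by the same bookkeeping across the $T$ trees, exactly as in \cite{rockova_vdp}.
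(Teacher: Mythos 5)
Your proposal follows essentially the same route as the paper: everything is inherited from the testing-based contraction machinery of Ro\v{c}kov\'{a}--van der Pas and Ro\v{c}kov\'{a}--Saha, and the only genuinely new ingredient is the small-ball prior mass of the Laplace step heights around the projection coefficients, which you bound via an $\ell_\infty$-box and the paper bounds via an $\ell_1$-ball --- both yielding $\e^{-cK\log n}\gtrsim\e^{-c'n\varepsilon_n^2}$ under $1/\lambda\lesssim\sqrt{K}$ and $\|f_0\|_\infty\lesssim\log^{1/2}n$. If anything you are more complete than the paper, which asserts that only the prior-mass condition needs re-checking, whereas you also verify the sieve complement mass and entropy bound under the heavier Laplace tails.
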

\begin{proof}
 It suffices to show the prior concentration condition (2.2) in Rockova and van der Pas (2017), i.e.
$$
\Pi(f_{\mT,\b}:\|f_{\mT,\b}-f_0\|_n\leq \varepsilon_n)\geq \e^{-d\,n\,\varepsilon_n^2}
$$
for some $d>2$.  Using similar considerations as in Section 8.2. of RP17, this boils down to showing that
$$
\Pi(\b\in\R^K:\|\b-\wh\b\|_2\leq \varepsilon_n/2)\geq \Pi(\b\in\R^K:\|\b-\wh\b\|_1\leq \varepsilon_n/2),
$$
where $\wh\b\in\R^K$ are the step heights of the projection of $f_0$ onto a partition supported by the $k$-$d$ tree with $K$ steps, where $K\lesssim n\varepsilon_n^2/\log n$. The right hand side equals
\begin{align*}
\int_{|\b-\wh \b|_1\leq \varepsilon_n/2}\left(\frac{\lambda}{2}\right)^K\e^{-\lambda|\b|_1}\d\b&>\e^{-\lambda|\wh\b|_1}\int_{|\b|_1\leq\varepsilon_n/2}\left(\frac{\lambda}{2}\right)^K\e^{-\lambda|\b|_1}\d\b\\
&>\e^{-\lambda|\wh\b|_1}\left(\frac{\varepsilon_n\lambda}{2K}\right)^K\e^{-\lambda\varepsilon_n/2}.
\end{align*}
Assuming that $\|f_0\|_\infty\lesssim{\log n}$, we have $|\wh\b|_1\leq K\log n\lesssim n\varepsilon_n^2$. Next,   for $1/\lambda\lesssim \sqrt{K}$ 
we have $K\log[4K/(\lambda\varepsilon_n^2)]\lesssim K\log n\lesssim n\varepsilon_n^2$.
\end{proof}
\subsection{Proof of Theorem \ref{thm3}}\label{sec:proof_thm3}
According to Lemma \ref{lemma1}, the posterior concentrates on the set $\mR(\mK_n)$. All the following arguments will be thus conditional on $\mR(\mK_n)$.
The conditional posterior decomposes into a mixture of laws with weights  $\pi(\mT\C\Y^{(n)},\mR(\mK_n))$ in the sense that
\begin{align*}
&\E^{\Pi}[\e^{t\sqrt{n}(\Psi(f_{\mT,\b})-\Psi_n)}\C\Y^{(n)},\mR(\mK_n)]\\
&\quad\quad=\sum_{\mT\in\mR(\mK_n)}\pi[\mT\C \Y^{(n)},\mR(\mK_n)]\E^{\Pi}[\e^{t\sqrt{n}(\Psi(f_{\mT,\b})-\Psi_n)}\C\Y^{(n)}, \mT]\\
&\quad\quad=\e^{t\times o_P(1)}\sum_{\mT\in\mR(\mK_n)}\pi[\mT\C \Y^{(n)},\mR(\mK_n)]I_{n,\mT}
\end{align*}
where 
$$
I_{n,\mT}=\E^{\Pi}\left[\e^{t\sqrt{n}(\Psi(f_{\mT,\b})-\wh\Psi_{\mT})}\big|\Y^{(n)},\mT\right]
$$ 
and where we used the fact that $\sqrt{n}|\wh\Psi_\mT-\Psi_n|=o_P(1)$ under the assumption of self-similarity (as we showed earlier in the Section \ref{sec:adaptive}).
Under the  Laplace prior \eqref{eq:priorb_laplace},
we can write for $\BO\in\mR(\mK_n)$
$$
\e^{\frac{t^2}{2}(\|a\|_L^2+o(1))}\times  \e^{-\frac{t\,\lambda}{\sqrt n}\,K_n C_a}<I_{n,\BO}<\e^{\frac{t^2}{2}(\|a\|_L^2+o(1))}\times  \e^{\frac{t\,\lambda}{\sqrt n}\,K_n C_a}.
$$
Since $K_n=\lfloor M_2n^{1/(2\alpha+1)}\rfloor$, we have for $1/2<\alpha\leq 1$ and for all $t\in \R$
\begin{align*}
\lim_{n\rightarrow\infty}\E^{\Pi}[\e^{t\sqrt{n}(\Psi(f_{\BO,\b})-\Psi_n)}\C\Y^{(n)},\mR(\mK_n)]=\e^{\frac{t^2}{2}\|a\|^2_L}.\quad \qedhere
\end{align*}
For the Gaussian prior, one can proceed analogously as before. For each $\mT\in\mR(\mK_n)\cap\mV^K$, we denote with $A_{n,\mT}(M)=\{f_{\mT,\b}\in\mF[\mT]:\|f_{\mT,\b}-f_0^\mT\|_L\leq M\, \sqrt{K\log n/n}\}$. Using the same arguments as in Section \ref{sec:setA}, one can show that, given $\mT\in\mR(\mK_n)$, the posterior concentrates on $A_{n,\mT}(M)$. We then define 
$I_{n,\mT}=\E^{\Pi}\left[\e^{t\sqrt{n}(\Psi(f_{\mT,\b})-\wh\Psi_{\mT})}\big|\Y^{(n)},\mT,A_{n,\mT}(M)\right]$ and using the same arguments show that $\lim_{n\rightarrow\infty}I_{n,\mT}=
\e^{\frac{t^2}{2}\|a\|^2_L}$  uniformly for all $\mT\in\mR(K_n)$.

\subsection{Proof of Theorem \ref{thm4}}\label{sec:thm4_proof}
Let  $a_\mE$ denote the projection of $a$ onto $\mF[\mE]$ (the set of all forest mappings \eqref{forest} supported on a given ensemble $\mE$). 
The no-bias condition \eqref{eq:nobias} is satisfied automatically since $a_\mE=a$.

Similarly as in Rockova and van der Pas (2017) (Corollary 5.1), one can show that the posterior concentrates on   $\mE$, whose  trees are not too big (i.e. $\sum_{t}K^t\lesssim n\varepsilon_n^2/\log n$ for $\varepsilon_n=n^{-\alpha/(2\alpha+p)}\log n$). 
The next step is to show that the prior is sufficiently diffuse in the sense that  it does not change much under a small perturbation. To this end, we introduce a local shift, for some $s>0$,
$$
f_{\mE,\mB_s}=\sum_{t=1}^T \left(f_{\mT^t,\b^t}-\frac{s}{T\sqrt{n}}\right)= f_{\mE,\mB}-\frac{s}{\sqrt{n}},
$$
where $\mB=[\b^1_s,\dots,\b^T_s]$ and where $\b^t_s=\b^t-s/(T\sqrt{n})$.
 Now we have to perform the change of measures from $f_{\mE,\mB}$ to $f_{\mE,\mB_s}$. We start with the independent Laplace prior \eqref{eq:priorb_laplace} for each $\b^t$ with a penalty $\lambda_t\asymp 1/\sqrt{K^t}$.
 Similarly as in Section \ref{sec:laplace_proof}, we have
 $$
 \prod_{t=1}^T \pi(\b^t)<\prod_{t=1}^T \pi(\b^t_s)\times \exp\left(\frac{s}{T\sqrt{n}}\sum_{t=1}^T\lambda_t K^t\right)<\prod_{t=1}^T \pi(\b^t_s)\times \exp\left(\frac{s\,c_1}{\sqrt{n}}\max_t\sqrt{K^t}\right)
 $$
 for some $c_1>0$. Since $\max_t{K^t}\lesssim n\varepsilon_n^2$, the exponential term converges to one. A similar argument holds also for the lower bound.
 For the Gaussian prior $\b^t\C K^t\sim\mathcal{N}_{K^t}(0,K^t\times\mathrm{I}_{K^t})$,   we have
$$
\prod_{t=1}^T\pi(\b^t)=\prod_{t=1}^T\pi(\b^t_s)\times \exp\left(\frac{s^2}{2nT^2}+\frac{s\|\mB\|_2}{\sqrt{nT}}\right),
$$
where $\mB=(\b^{1\prime},\dots,\b^{T\prime})\in\R^{\sum_tK^t}$ is the vector of all step heights in the ensemble.  Similarly as before, one can show that the conditional posterior distribution of $\mB$, given $\mE$, concentrates around the projection of $f_0$ onto  $\mF[\mE]$ at the rate $\sum_t K^t\log n$. Since $\sqrt{\sum_t K^t\log n/n}\lesssim \sqrt{\varepsilon_n^2\log n}\rightarrow 0$, we can use similar arguments as in Section \ref{sec:proof:thm1} to conclude the BvM property.

\end{document}